\renewcommand\footnotetextcopyrightpermission[1]{} 
\newcommand{\FF}{\mathbb{F}}
\newcommand{\ZZ}{\mathbb{Z}}
\newcommand{\QQ}{\mathbb{Q}}
\newcommand{\RR}{\mathbb{R}}
\newcommand{\CC}{\mathbb{C}}
\renewcommand{\AA}{\mathbb{A}}
\newcommand{\PP}{\mathbb{P}}
\DeclareMathOperator{\Proj}{Proj}
\newcommand{\ab}{{(a:b)}}
\newtheorem{theorem}{Theorem} 
\theoremstyle{acmdefinition}
\numberwithin{theorem}{section}
\newtheorem{remark}[theorem]{Remark} 
\begin{document}

\title{Real Space Sextics and their Tritangents}

\author{Avinash Kulkarni}
\affiliation{%
  \institution{Simon Fraser University}
  \streetaddress{8888 University Drive}
  \city{Burnaby}
  \postcode{V5A 1S6}
  \country{Canada}
}
\email{avi_kulkarni@sfu.ca}

\author{Yue Ren}
\affiliation{%
  \institution{Max Planck Institute MIS Leipzig}
  \streetaddress{Inselstrasse 22}
  \city{Leipzig}
  \postcode{04301}
  \country{Germany}
}
\email{yueren@mis.mpg.de}

\author{Mahsa Sayyary Namin}
\affiliation{%
  \institution{Max Planck Institute MIS Leipzig}
  \streetaddress{Inselstrasse 22}
  \city{Leipzig}
  \postcode{04301}
  \country{Germany}
}
\email{mahsa.sayyary@mis.mpg.de}

\author{Bernd Sturmfels}
\affiliation{%
  \institution{Max Planck Institute MIS Leipzig}
  \streetaddress{Inselstrasse 22}
  \city{Leipzig}
  \postcode{04301}
  \country{Germany}
}
\email{bernd@mis.mpg.de}

\renewcommand{\shortauthors}{A. Kulkarni, Y. Ren, M. Sayyary, B. Sturmfels}

\begin{abstract}
  The intersection of a quadric and a cubic surface in 3-space is a canonical curve of genus 4.
  It has 120 complex tritangent planes. We present algorithms
  for computing real tritangents, and we study the associated discriminants.
  We focus on space sextics that arise from del Pezzo surfaces of degree one.
  Their numbers of planes that are tangent at three real points vary widely;
  both 0  and 120 are attained. This solves a problem suggested by Arnold Emch in 1928.
\end{abstract}

\maketitle

\section{Introduction}

We present a computational study of canonical curves of genus~$4$ over the field $\RR$ of
real numbers. Such a curve $C$, provided it is
smooth and non-hyperelliptic, is the complete intersection in $\PP^3$ of a unique surface
$Q$ of degree two and a (non-unique) surface $K$ of degree three.
Conversely, any smooth
complete intersection of a quadric and a cubic in $\PP^3$
is a genus $4$ curve. The degree of $C = Q \cap K$ is six: any plane
in $\PP^3$ meets $C$ in six complex points, counting multiplicity.
We refer to such a curve $C $ as a \emph{space sextic}.

Any space sextic $C$ has at least $120$ complex tritangent planes,
one for each odd theta characteristic of $C$. If the quadric $Q$
is smooth, then these $120$ planes are exactly the tritangents
\cite[Theorem 2.2]{HL18}. However, if $Q$ is singular, then the curve
$C$ has infinitely many tritangents.
We can see this as follows. Any plane
$H$ tangent to $Q$ contains the singular point of $Q$,
and it is tangent to $Q$ at every point in
the line $H \cap Q $. Since the
intersection of $H$ and $C$ is contained in $Q$, the plane $H$ is tangent
to $C$ at every point in $C \cap H$.

In what follows we focus on the case when the
quadric surface $Q$ containing the space sextic $C$ is singular. We adopt the convention that a
\emph{tritangent} of $C$ is one of the $120$ complex planes corresponding to the
odd theta characteristics of $C$. A tritangent is {\em real} if it is defined by a linear
form with real coefficients. A real tritangent is {\em totally real} if it touches
the curve $C$ at three distinct real points.

\begin{figure}[h]
  \captionsetup{singlelinecheck=off}
  \centering
  \begin{tikzpicture}
    \node at (0,0) {\includegraphics[width=0.7\linewidth]{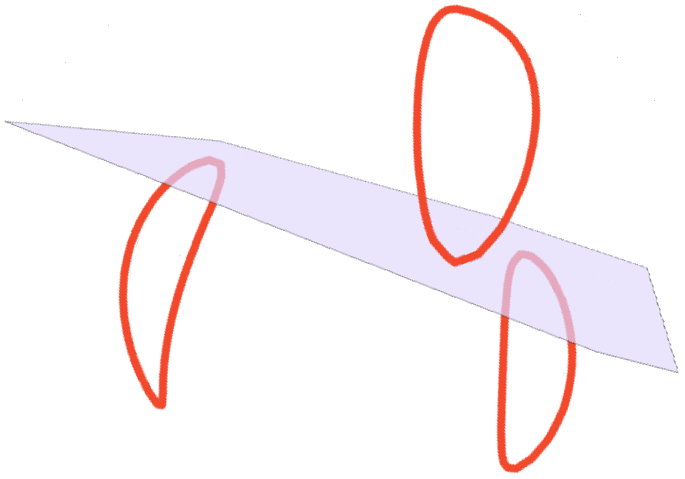}};
    \node[font=\huge] at (1.2,1) {$O_1$};
    \node[font=\large] at (-1.65,-0.4) {$O_2$};
    \node[font=\large] at (1.7,-1.25) {$O_3$};
  \end{tikzpicture}\vspace{-5mm}
  \caption[1]{Totally real tritangent of a curve with three ovals. The plane touches
    $O_1$ on one side and $O_2,O_3$ on the other.}
  \label{fig:quarticsAndBitangents}
\end{figure}

A space sextic $C$ has at most five ovals \cite[\S 3]{HL18},
since the maximum number of ovals is the genus of $C$ plus one.
By \cite[Proposition 3.1]{HL18}, all $120$ tritangents of $C$ are real if and only if
the number of ovals of $C$ attains this upper bound.
A heuristic argument suggests that at least $80 = \binom{5}{3} {\times} 8$
of the $120$ real tritangents are totally real, since eight planes can
touch three ovals as in Figure~\ref{fig:quarticsAndBitangents}.
The analogous fact for genus three curves is true: a plane quartic
with four ovals has $28$ real bitangents, of which at least $24 = \binom{4}{2} {\times} 4$
are totally real. The situation is more complicated in genus $4$,
as seen in Figure~\ref{fig:threeOvals}.

\begin{figure}[h]
  \captionsetup{singlelinecheck=off}
  \begin{center}
    \includegraphics[height=3.5cm]{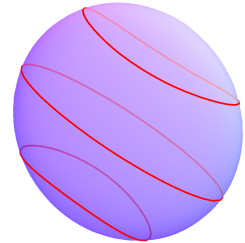}
    \vspace{-3mm}
  \end{center}
  \caption[1]{No tritangent touches all three ovals of this curve.}
  \label{fig:threeOvals}
\end{figure}

In 1928, Emch \cite[\S 49]{Emch28} asked whether there exists a
space sextic with all of its $120$ tritangent planes totally real. He exhibited a curve suspected
to attain the bound $120$. However, ninety years later,
Harris and Len \cite[Theorem 3.2]{HL18} showed that only $108$ of
the tritangents of Emch's curve are totally real.
In \cite[Question 3.3]{HL18} they reiterated the question whether
$120$ totally real tritangents are possible. Our
Example \ref{ex:120} answers that question affirmatively.

\begin{theorem} \label{thm:main}
  The number of totally real tritangents of a space sextic with five ovals
  can be any integer between $84$ and $120$. Each of these numbers is realized
  by an open semialgebraic set of such curves.
\end{theorem}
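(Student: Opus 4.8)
The plan is to treat the totally-real count as a locally constant integer-valued function on a semialgebraic parameter space, to show it jumps by exactly $\pm 1$ across generic walls of a discriminant, and then to connect a curve with count $120$ to one with count $84$ by a path that crosses these walls one at a time, invoking a discrete intermediate value principle. Concretely, using the del Pezzo degree-one picture I fix the quadric cone $Q$ and let the cubic $K$ range over an open semialgebraic set $U$ of coefficient vectors for which $C = Q \cap K$ is smooth with five ovals; by \cite[Proposition 3.1]{HL18} all $120$ tritangents are then real throughout $U$. On $U$ I define $\tau(C)$ to be the number of these planes touching $C$ in three distinct real points.

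Next I would describe the discriminant. A real tritangent $H$ fails to be totally real exactly when its three contact points are not all real, and the transition occurs when two real contact points collide into a real double-contact point before leaving the real locus as a complex-conjugate pair. Because we stay inside the five-oval region, the $120$ planes remain real, so the only transitions affecting $\tau$ are these contact-point collisions. The locus where some tritangent has such a degenerate contact is a real hypersurface $\Delta \subset U$ cut out by a discriminant polynomial, and $\tau$ is constant on each connected component of the open semialgebraic set $U \setminus \Delta$; these components are the chambers furnishing the claimed open semialgebraic sets.

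The key local lemma concerns a transversal crossing of a smooth point of $\Delta$ lying on the discriminant of a single tritangent. I would establish a local normal form in which exactly one colliding pair of contact points of exactly one plane changes between real and complex-conjugate, so that $\tau$ changes by exactly $\pm 1$. The delicate point is genericity: one must ensure that generic walls are governed by one plane and one colliding pair, with simultaneous collisions on two planes confined to a codimension-two locus that a generic path avoids. This is precisely what makes every integer, rather than merely every other integer, attainable.

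Finally I would supply the two endpoints and connect them. The curve of Example~\ref{ex:120} gives a chamber with $\tau = 120$, and a second explicit curve gives a chamber with $\tau = 84$; choosing a path in $U$ between these chambers that meets $\Delta$ only transversally at smooth points, the local lemma and the discrete intermediate value theorem force $\tau$ to realize every integer in $[84,120]$ along the path, each on an open chamber. The main obstacle I anticipate is the combination of the $\pm 1$ wall-crossing lemma with the lower bound: proving the jump is exactly one requires the careful local normal form and the genericity statement for $\Delta$, while pinning the minimum at $84$ rather than the naive count $\binom{5}{3} \times 8 = 80$ requires a topological analysis of how three real contact points distribute among the five ovals, accounting for planes that meet fewer than three ovals or are tangent to a single oval in two points. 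That distribution argument is the genuinely new input and the part most likely to resist a purely formal treatment.
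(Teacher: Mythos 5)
Your wall-crossing strategy is precisely the argument sketched in Remark 2.4 of the paper, and the paper explicitly flags why it is not yet a proof: the path you need from the $120$-chamber to the $84$-chamber may be forced to cross the discriminant $\Delta_1$ of singular curves, where your counting function $\tau$ is undefined and both the real and the totally real counts can jump by more than one. Your phrase ``because we stay inside the five-oval region'' begs the crucial question: you need the two endpoint examples to lie in one path-connected component of the locus of smooth five-oval curves, and nothing in your proposal establishes this. In the parameter space $(\PP^2_\RR)^8$ of the del Pezzo construction, the locus $\Sigma$ of non-generic configurations (three points collinear, six on a conic, etc.) is a real hypersurface whose complement is disconnected --- already the order type of eight labeled real points can differ between the two examples --- and crossing $\Sigma$ degenerates the branch curve $C_\mathcal{P}$, i.e.\ crosses $\Delta_1$. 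The same issue persists if you fix the cone and vary only $K$, since the topological type of the real curve can change only through singular curves, and the connectivity of the smooth five-oval locus is not known. Your local $\pm 1$ lemma at smooth points of the tritangent discriminant is fine and matches the paper's Remark, but without connectivity the discrete intermediate value argument never gets off the ground.

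The paper's actual proof sidesteps all of this by brute force: for every integer $m$ with $84 \le m \le 120$ the authors construct an explicit configuration $\mathcal{P} \subset \PP^2_\QQ$ whose curve has exactly $m$ totally real tritangents, certified symbolically via the intersections of the pairs of exceptional curves; the openness claim then follows from the perturbation argument you also invoke (each tritangent is an isolated regular solution of the tritangent equations, so the implicit function theorem preserves both reality and total reality under small deformations of $(Q,K)$). Two further corrections. First, your last paragraph misreads the statement: the theorem does not assert that $84$ is a lower bound over all five-oval curves, only that every value in $[84,120]$ is attained, so the ``distribution among ovals'' analysis you anticipate as the genuinely new input is not needed --- whether $84$ is actually minimal is left open (Problem 2 in Section 6). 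Second, even your endpoints require explicit curves produced and verified computationally, so the computational core of the paper's proof cannot be avoided by your route; at best it would reduce the number of required examples from thirty-seven to two, and only after the connectivity question is settled.
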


This article is organized as follows.
In Section~2 we construct space sextics associated with
del Pezzo surfaces of degree one. These curves lie on a singular quadric
$Q$ and are obtained by blowing up eight points in the plane.
This construction has the advantage of producing $120$ rational tritangents
when the points are rational. In Section~2 we also prove Theorem \ref{thm:main}.
In Section~3 we extend this construction to real curves obtained
from complex configurations in $\PP^2$ that are invariant under complex conjugation.
Theorem \ref{thm:table1census} summarizes what we know about these special space sextics.
In Section~4 we turn to arbitrary space sextics, where $Q$ is now generally smooth,
and we show how to compute the $120$ tritangents of
$C = Q \cap K$ directly from the equations defining $Q$ and $K$.
Section~5 offers a study of the discriminants associated with our polynomial system,
and Section~6 sketches some directions for future research. Finally, the scripts
used throughout this article are available at \cite{KNRSscripts}.

\section{Eight Points in the Plane}

We shall employ the classical construction of space sextics from del Pezzo surfaces of degree one.
We describe this construction below and direct the reader to \cite[\S 8]{D12} or \cite[\S 2]{Kulkarni16}
for further details. Any space sextic
$C$ that is obtained from this construction is special:
the quadric $Q$ that contains $C$ is singular.
See also \cite{Kulkarni17}, where these curves $C$ are referred to as
{\em uniquely trigonal genus $4$ curves}.

Fix a configuration $\mathcal{P} = \{P_1,P_2,P_3,P_4,P_5,P_6,P_7,P_8\}$ of
eight points in $\PP^2_\RR$. We may assume that $\mathcal{P}$ is sufficiently
generic to allow for the choices to be made below. Additionally,
genericity of $\mathcal{P}$ ensures that the resulting space sextic $C$
is a smooth curve in $\PP^3$. For practical computations we always choose points $P_i$
whose coordinates are in the field $\QQ$ of rational numbers. This ensures that
each object arising in our computations is defined over $\QQ$.

The space of ternary cubics that vanish on $\mathcal{P}$ is two-dimensional.
We compute a basis $\{u,v\}$ for that space. The space of ternary sextics
that vanish doubly on $\mathcal{P}$ is four-dimensional, and it
contains the three-dimensional subspace spanned by $\{u^2,uv,v^2\}$.
We augment this to a basis by another sextic $w$
that vanishes to order two on $\mathcal{P}$.

The blow-up of $\PP^2$ at the eight points in $\mathcal{P}$ is a
{\em del Pezzo surface} $X_\mathcal{P}$ of degree one.
Our basis $\{u^2, uv , v^2, w\}$ specifies a rational map $\, \PP^2 \dashrightarrow \PP^3 \,$
that is regular outside $\mathcal{P}$ and hence lifts to
$X_\mathcal{P}$. This map is 2-to-1 and its image
is the singular quadric $V(x_0 x_2 - x_1^2)$.
The ramification locus consists of two connected components, the isolated point $(0:0:0:1)$
and the intersection of the quadric $V(x_0 x_2 - x_1^2)$ with a cubic $C$
that is unique modulo $\langle x_0 x_2 - x_1^2 \rangle$.

Following \cite[Example 2.5]{Kulkarni16}, we parametrize the singular quadric
$Q$ as $\{(1:t:t^2:W)\}$. This represents $C$ by
a polynomial in two unknowns $(t,W)$ that has
Newton polygon $\,{\rm conv}\{(0,0), (6,0),(0,3) \}$:
\begin{equation}
  \label{eq:explicitC}
  \begin{aligned}
    \!\!   C : \,\,& t^6 \! + \! c_1 t^5\! + \! c_2 t^4W\! + \!c_3 t^4 \! + \! c_4 t^3 W \! + \! c_5 t^2W^2
    \! + \! c_6 t^3 \! + \! c_7 t^2 W \! +  \\ & \,
    \! c_8 t W^2  \! + \! c_9 W^3  \! + \! c_{10} t^2
    \! + \! c_{11} tW \! + \! c_{12} W^2 \! + \! c_{13} t \! + \! c_{14} W \! + \! c_{15}.
  \end{aligned}
\end{equation}
We derive $120$ tritangents of our curve $C$ in $\PP^3$ from the $240$
exceptional curves on the del Pezzo surface $X_\mathcal{P}$ (cf.~Lemma
\ref{lem:exceptional curves and tritangents}). There is an order two
automorphism $\iota$ of $X_\mathcal{P}$, called the \emph{Bertini involution}. The
image of an exceptional curve $C_1$ under the Bertini involution $\iota$ is another exceptional curve
$C_2 = \iota(C_1)$.
If $\, \varphi\colon X_\mathcal{P} \rightarrow V(x_0 x_2 - x_1^2 ) \,$ is the 2-to-1 covering
branched along $C$, then  $\varphi \circ \iota = \varphi$. In particular, $ \varphi(C_1)
= \varphi(C_2) $. The intersection $C_1 \cap C_2$ consists of three points
on $X_\mathcal{P}$. Their image under $\varphi$
is the triple of points at which the tritangent
corresponding to $\{C_1,C_2\}$ touches $C$.
We can thus decide whether a tritangent is totally real by checking whether
the intersection $C_1 \cap C_2$ in $X_\mathcal{P}$ contains one or three real points.
This intersection can be carried out in $\PP^2$, as we shall explain next.

Recall that $X_\mathcal{P}$ is the blow-up of $\PP^2$ at $\mathcal{P}$. By blowing down,
we may view the eight exceptional fibers of the blow-up as the eight points of $\mathcal{P}$,
and we may view the remaining $112$ exceptional curves of $X_\mathcal{P}$ as (possibly singular) curves in $\PP^2$.
We can determine the images of the exceptional curves in $\PP^2$ from \cite[Table 1]{TVV09}, as well as how they are matched into pairs $\{C_1,C_2\}$
via the Bertini involution:

\begin{enumerate}
\item[8:]
  The exceptional fiber at one point $P_i$ matches the sextic
  vanishing triply at $P_i$ and doubly at the other seven points. The three components
  of the tangent cone of this
  sextic determine the three desired points on the branch curve $C$.
\item[28:] The line through $P_i$ and $P_j$ matches the quintic vanishing at
  all eight points and doubly at the six points in $\mathcal{P} \backslash \{P_i, P_j\}$.
  Their intersection in $\PP^2 \setminus \mathcal P$ consists of three complex points.
  Either one or three of them are real (see Figure~\ref{fig:type28}).
  \begin{figure}[h]
    \centering\vspace{-0.15cm}
    \begin{tikzpicture}[scale=0.2]
      \draw[red] plot[smooth, tension=1] coordinates { (0,0) (4,-2) (9,0) (7,4) (4,2) (9,-2) (14,-1) (15,2) (11,1) (13,-5) (18,-7) (20,-4) (16,-3) (15,-9) (18,-12) (21,-10) (17,-8) (12,-12) (12,-16) (15,-17) (16,-14) (9,-13) (7,-15) (10,-17) (9,-11) (4,-11) (0,-18) };
      \draw (-2,-15) -- (22,-15);
      \fill[red] (7.5,-1.45) circle (8pt);
      \fill[red] (11.25,-2.35) circle (8pt);
      \fill[red] (14.75,-6.6) circle (8pt);
      \fill[red] (15,-8.75) circle (8pt);
      \fill[red] (11.5,-12.9) circle (8pt);
      \fill[red] (10.05,-12.9) circle (8pt);
      \fill (16.75,-15) circle (8pt);
      \fill (7,-15) circle (8pt);
      \draw[fill=white] (1.15,-15) circle (8pt);
      \draw[fill=white] (10.35,-15) circle (8pt);
      \draw[fill=white] (11.45,-15) circle (8pt);
      \node[anchor=north east] at (7,-15) {$P_i$};
      \node[anchor=north west] at (16.75,-15) {$P_j$};
    \end{tikzpicture}\vspace{-0.25cm}
    \caption{$\mathcal{P}$ determines $28$ lines meeting a rational quintic}
    \label{fig:type28}
  \end{figure}
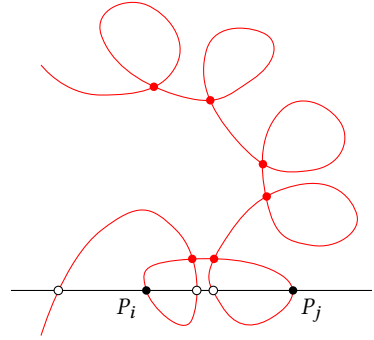
\item[56:] The conic through $P_{i_1}, \ldots, P_{i_5}$ matches the quartic vanishing at $\mathcal{P}$
  and doubly at the three other points. Their
  intersection in $\PP^2 \setminus \mathcal P$ consists of three complex points (see Figure~\ref{fig:type58}).
  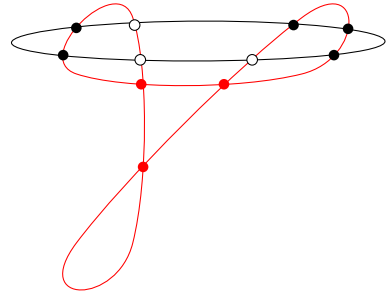
\begin{figure}[h]
    \centering\vspace{-5mm}
    \begin{tikzpicture}[scale=0.25]
      \draw[red] plot[smooth cycle, tension=1] coordinates { (0,0) (12,0) (12,3) (0,-9) (3,-9) (3,3) };
      \draw plot[smooth cycle, tension=1] coordinates { (-1,1) (13.5,1) (13.5,2.5) (0,2.5) };
      \fill[red] (3.45,-0.55) circle (8pt);
      \fill[red] (7.85,-0.55) circle (8pt);
      \fill[red] (3.55,-4.95) circle (8pt);
      \fill (-0.7,1) circle (8pt);
      \fill (0,2.45) circle (8pt);
      \fill (13.7,1) circle (8pt);
      \fill (14.45,2.4) circle (8pt);
      \fill (11.55,2.6) circle (8pt);
      \draw[fill=white] (3.1,2.6) circle (8pt);
      \draw[fill=white] (3.4,0.75) circle (8pt);
      \draw[fill=white] (9.35,0.75) circle (8pt);
    \end{tikzpicture}\vspace{-5mm}
    \caption{$\mathcal{P}$ determines $56$ conics meeting a rational quartic}
    \label{fig:type58}
  \end{figure}
\item[56/2:] For two points $P_i$ and $P_j$, the cubic vanishing doubly at $P_i$, non-vanishing at
  $P_j$, and vanishing singly at $\mathcal{P} \backslash \{P_i,P_j\}$
  matches the cubic vanishing doubly
  at $P_j$, non-vanishing at $P_i$, and vanishing singly at $\mathcal{P} \backslash \{P_i,P_j\}$.
  Their intersection in $\PP^2 \backslash \mathcal{P}$
  consists of three points in $\PP^2$
  (see Figure~\ref{fig:type58/2}).
  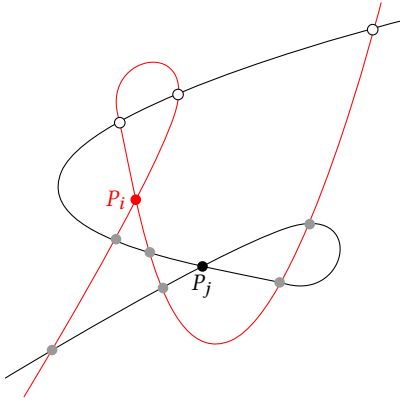
\begin{figure}[h]
    \centering
    \begin{tikzpicture}[scale=0.25]
      \draw[red] plot[smooth, tension=1] coordinates { (-2,-3) (6,12) (3,12) (9,0) (17,18) };
      \draw plot[smooth, tension=1] coordinates { (-3,-2) (12,6) (12,3) (0,9) (18,17) };
      \fill[red] (3.95,7.5) circle (8pt);
      \node[red,anchor=east] at (3.95,7.5) {$P_i$};
      \fill (7.5,3.95) circle (8pt);
      \node[anchor=north] at (7.5,3.95) {$P_j$};
      \fill[black!40] (-0.5,-0.5) circle (8pt);
      \fill[black!40] (5.4,2.8) circle (8pt);
      \fill[black!40] (13.2,6.2) circle (8pt);
      \fill[black!40] (11.6,3.1) circle (8pt);
      \fill[black!40] (4.7,4.7) circle (8pt);
      \fill[black!40] (2.9,5.4) circle (8pt);
      \draw[fill=white] (3.1,11.6) circle (8pt);
      \draw[fill=white] (6.2,13.1) circle (8pt);
      \draw[fill=white] (16.55,16.55) circle (8pt);
    \end{tikzpicture}\vspace{-0.25cm}
    \caption{$\mathcal{P}$ determines $56/2$ pairs of rational cubics
      \label{fig:type58/2}}
  \end{figure}
\end{enumerate}
The following lemma summarizes the reality issues on the del Pezzo surface
$X_\mathcal{P}$ that arises from the constructions
in $\PP^2$ described above.

\begin{lemma} \label{lem:exceptional curves and tritangents}
  Let $\{C_1,C_2\}$ be a pair of exceptional curves of type 8, 28, 56 or 56/2 contained in
  the del Pezzo surface $X_\mathcal{P}$ Then
  $\varphi(C_1\cap C_2)$ spans a tritangent plane of
  the space sextic $C$ in $\PP^3$. That
  tritangent is totally real if and only if
  the intersection $C_1\cap C_2$ is real on $X_\mathcal{P}$.
\end{lemma}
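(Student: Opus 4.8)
The plan is to build on the covering-theoretic facts already assembled above, namely that the Bertini involution $\iota$ is the deck transformation of the double cover $\varphi$, that $\varphi\circ\iota=\varphi$, and that for a Bertini pair $\{C_1,C_2\}$ the three points of $C_1\cap C_2$ map under $\varphi$ to the triple of points at which the associated tritangent touches $C$. Granting these, the statement reduces to three tasks: (a) verify that the four families of exceptional curves labelled $8$, $28$, $56$, $56/2$ are \emph{exactly} the $120$ Bertini pairs; (b) check that each such pair produces a genuine tritangent plane spanned by $\varphi(C_1\cap C_2)$; and (c) establish the reality criterion.

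For (a) I would work in the Picard lattice $\mathrm{Pic}(X_{\mathcal P})=\ZZ\ell\oplus\bigoplus_{i=1}^{8}\ZZ e_i$, where $\ell$ is the class of the pullback of a line and $e_i$ that of the exceptional fiber over $P_i$, with $\ell^2=1$, $e_i^2=-1$, $\ell\cdot e_i=0$ and canonical class $K=-3\ell+\sum_i e_i$. The Bertini involution acts by $\iota^*(D)=-D+2(D\cdot K)\,K$, the unique isometry fixing $K$ and acting as $-1$ on $K^{\perp}$. Applying this to a representative of each orbit — the class $e_i$, a line class $\ell-e_i-e_j$, a conic class $2\ell-e_{i_1}-\cdots-e_{i_5}$, and a double-point cubic class $3\ell-2e_i-\sum_{k\neq i,j}e_k$ — I would compute directly that $\iota^*$ sends them respectively to the sextic, quintic, quartic, and opposite double-point cubic classes listed in items $8$, $28$, $56$, $56/2$, reproducing the pairings of \cite[Table 1]{TVV09}; the counts $8+28+56+28=120$ then confirm that all Bertini pairs are accounted for. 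In every case $C_1+C_2=\iota^*(C_1)+C_1=-2K$, so $\varphi(C_1)=\varphi(C_2)$ is a curve of degree $C_1\cdot(-2K)=2$ on $Q$; since $\varphi$ restricted to $C_1$ is birational onto its image, this is a conic, whose linear span is a plane $H$ — the candidate tritangent.

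For (b) I would identify $C_1\cap C_2$ with the ramification of $\varphi$. Let $R\sim -3K$ be the fixed curve of $\iota$, mapping isomorphically to the branch sextic $C$. If $p\in R\cap C_1$ then $\iota(p)=p\in\iota(C_1)=C_2$, so $R\cap C_1\subseteq C_1\cap C_2$; since $C_1\cdot C_2=-C_1^2-2C_1\cdot K=3=R\cdot C_1$, these loci coincide. Thus the three points of $C_1\cap C_2$ lie on $R$ and map under $\varphi$ to three points of $C$ at which, by the local square-root model of the double cover transverse to $R$, the conic $\varphi(C_1)$ meets $C$ with multiplicity two. Hence $H\cap C$ consists of three doubled points, $H$ is a tritangent, and $\varphi(C_1\cap C_2)$ spans it.

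Finally, for (c), the entire configuration is defined over $\RR$ (indeed over $\QQ$), so $\varphi$ commutes with complex conjugation and restricts to a bijection on real points; therefore the three tangency points $\varphi(C_1\cap C_2)$ are all real precisely when the three points of $C_1\cap C_2$ are real on $X_{\mathcal P}$, which, with distinctness furnished by the genericity of $\mathcal P$, is the definition of the tritangent being totally real. The intersection $C_1\cap C_2$ may then be computed after blowing down, in $\PP^2\setminus\mathcal P$, by intersecting the plane models drawn in the figures (with the type-$8$ case read off from the tangent cone of the sextic), which makes the reality test effective. I expect the geometric core of (b) — the identification $C_1\cap C_2=R\cap C_1$ and the resulting order-two contact — to be the main subtlety; the lattice bookkeeping of (a) and the real-structure argument of (c) are then routine.
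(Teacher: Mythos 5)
Your proof is correct and follows essentially the same route as the paper's: the double cover $\varphi$ given by $|-2K|$, Bertini pairs with $C_1+C_2\sim -2K$, tangency of the plane section at the three points of $\varphi(C_1\cap C_2)$, and the reality criterion via equivariance of $\varphi$ under conjugation. The differences are expository rather than structural: you derive the pairing by the lattice formula $\iota^*(D)=-D+2(D\cdot K)K$ where the paper simply cites \cite[Table 1]{TVV09}, and you justify the order-two contact via the clean identification $C_1\cap C_2=R\cap C_1$ with the ramification curve $R\sim -3K$ (both intersection numbers being $3$), where the paper only asserts that intersection points on $X_\mathcal{P}$ become points of tangency downstairs.
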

\begin{proof}
  Let $-K$ be the anticanonical divisor class of $X_\mathcal{P}$.
  Then $-K$ and $-2K$ are ample but not very ample.
  The class $-3K$ is very ample, and its linear system embeds $X_\mathcal{P}$
  into $\PP^6$. Consider the sequence of maps
  $\PP^2 \dashrightarrow X_{\mathcal{P}} \rightarrow V(x_0 x_2 - x_1^2) \subset \PP^3$.
  The first map is the blow-up, which is birational. The second map
  is the 2-1 morphism $\varphi$ given by the linear system ${|-2K|}$. The second map takes the
  $240$ exceptional curves in pairs $\{C_1,C_2\}$ onto the
  $120$ hyperplane sections of $V(x_0 x_2 - x_1^2)$ defined by
  the tritangent planes of $C$.

  The pairs are as indicated above, since their classes add up to $-2K$ by
  \cite[Table 1]{TVV09}. Intersection points of the pairs of curves on $X_\mathcal{P}$
  become singular points of the intersection curves on $V(x_0 x_2 -x_1^2)$,
  so the planes are tangent at those points. The tritangent being totally real means
  that these three points have real coordinates.
\end{proof}

In our computations, the del Pezzo surface $X_\mathcal{P}$ is represented
by $(\PP^2,\mathcal{P})$.
For each of the triples of points described above, we can compute their images in
$V(x_0 x_2 - x_1^2) \subset \PP^3$ using Gr\"obner-based elimination.
These triples are the contact points of the corresponding tritangent plane of $C$.
We may choose an affine open subset of $V(x_0 x_2 - x_1^2)$, isomorphic to
$\AA^2$, containing these three points. The intersection of a plane in $\PP^3$
with the singular quadric $Q$ is represented on this open subset by a plane curve with
Newton polygon $\,{\rm conv}\{ (0,0), (2,0), (0,1) \}$. We normalize this as follows:
\begin{equation}
  \label{eq:tritangent}
  \hbox{tritangent planes:} \qquad \qquad
  t^2 \,+ \, e_1 t \,+\,e_2 \,+\, e_3 W .  \qquad
\end{equation}
The upper bound
in Theorem~\ref{thm:main} is attained with Example \ref{ex:120}.

\begin{example} \label{ex:120}
  \rm Consider the following configuration of eight points:
  \begin{align*}
    \mathcal{P} \; = \; \bigl\{\; &(1\! :  \!0\! : \!0),\; (0\! : \!1\! : \!0),\; (0\! : \!0\! : \!1),\; (1\! : \!1\! : \!1),\; (10\! : \!11\! : \!1), \\
                                  & (27\! : \!2\! : \!17),\; (-19\! : \!11\! : \!-12),\; (-15\! : \!-19\! : \!20) \, \bigr\} \;\subset \;\PP^2_\RR.
  \end{align*}
  The resulting space sextic $C$ in
  $V(x_0 x_2 - x_1^2)$ has
  $120$ totally real tritangents.
  We prove this by computing the pairs of special curves in $\PP^2$
  and by computing their triples of intersection points as described above.
  For each of the $112 = 28 + 56 + 56/2$ pairs of curves as above, we found
  that all three intersection points are real. We verified that the remaining
  eight tritangents of $C$ are also totally real by computing the tangent cones
  of the special sextics in item 8.

  \smallskip

  We now convert the curve $C$ to the format in (\ref{eq:explicitC}). From that we
  can recover the pair $(Q,K)$ defining the canonical model of $C$, for the independent verification
  in Example \ref{ex:120b}.
  We start by computing the cubics $u,v$.
  They are minimal generators of the ideal $I:=\bigcap_{i=1}^8 \mathfrak m_{P_i}$, where $\mathfrak m_{P_i}$ denotes the maximal ideal corresponding to the point~$P_i$:
  \begin{align*}
    u \,=\,\,& 7151648400xy^2-434820164119x^2z+354394201544xyz\\[-0.1cm]
             & \quad -38806821565y^2z+692107405715xz^2-580026269975yz^2,\\
    v \,=\,\,& 14303296800x^2y-782195108453x^2z+613370275528xyz-\\[-0.1cm]
             & \quad 49450554755y^2z+1245021817105xz^2-1041049726225yz^2.
  \end{align*}
  Next, we compute the sextic $w$. It is the element of lowest degree in $I^{(2)}\setminus I^2$, where $I^{(2)}$ is the {\em symbolic square} of the ideal $I$. We find
  \begin{align*}
    w \,=\,\,& 175674063641748261863073581969689280x^4yz\\[-0.1cm]
             & \quad +11115515429554564750686439346701440x^3y^2z\\[-0.1cm]
             & \quad -445819563363162103552629662552521920x^2y^3z\\[-0.1cm]
             & \quad +264167833624792096768707005238371200xy^4z\\[-0.1cm]
             & \quad -20036962656454818365487885637968107x^4z^2 \\[-0.1cm]
             & \quad -294913066878605444782558855953184976x^3yz^2\\[-0.1cm]
             & \quad -44062271090476792370117994521819642x^2y^2z^2\\[-0.1cm]
             & \quad +755657199632193956412295956477085200xy^3z^2\\[-0.1cm]
             & \quad -416363969347671237983809688854251675y^4z^2\\[-0.1cm]
             & \quad +32905512814926710254817331888615230x^3z^3\\[-0.1cm]
             & \quad +28993156637165570509985808089578930x^2yz^3\\[-0.1cm]
             & \quad +40808451826702177753226924348677890xy^2z^3\\[-0.1cm]
             & \quad -78682528595564243828185219353313650y^3z^3\\[-0.1cm]
             & \quad -1745283730188673093290045100489475x^2z^4\\[-0.1cm]
             & \quad -5237850029165498581303629066909850xyz^4\\[-0.1cm]
             & \quad -2460237915794525755410066318259875y^2z^4.
  \end{align*}
  The curve $C$ is defined by the generator of the principal ideal
  \[ \left( \left(
        \langle \det J(u,v,w)\rangle + \text{Minors}_{2\times 2}
        \left(\begin{smallmatrix}
            u^2 & uv & v^2 & w \\
            1 & t & t^2 & W
          \end{smallmatrix}\right)\right) : \langle u,v \rangle^2 \right)
    \, \cap \,\QQ[t,W],\]
  where $J(u,v,w)$ is the Jacobian matrix of the map $(x,y,z)\mapsto (u,v,w)$. The determinant of
  $J(u,v,w)$ gives the singular model of the branch curve in $\PP^2$
  and the $2\times 2$ minors determine its image in the singular quadric in $\mathbb P^3$.
  In our case, the generator of the principal ideal is in the form of (\ref{eq:explicitC}),
  and explicitly is given by
  \begin{footnotesize}
    \begin{align*}
      C :\, &\,\,22070179871476654215734436981460373192064947078797748209t^6\\[-0.1cm]
            &+5585831392725719195345163470516310362705889042844010328t^5\\[-0.1cm]
            &+14175569812724447393500233789877848531491265t^4W\\[-0.1cm]
            &-447718078603500717216424896040737869157828321607704039864t^4\\[-0.1cm]
            &-86567655386571901223236593151698362962027440t^3W\\[-0.1cm]
            &+57114529769698357624742306475t^2W^2\\[-0.1cm]
            &+474302309016648096934423520799618219755274954155075926592t^3\\[-0.1cm]
            &+192856342071229007723481356183461213738057680t^2W\\[-0.1cm]
            &-194302706043604453258752959400tW^2 \,
              - \, 26371599148125W^3\\[-0.1cm]
            &+2341397816853864817617847981162945070584483528261510775184t^2\\[-0.1cm]
            &-183528856281941126263893376861009344326329920tW\vphantom{W^2}\\[-0.1cm]
            &+164969244105921949388612135400W^2\\[-0.1cm]
            &-5390258693970772695117811943833419754488807920338145746560t\vphantom{W^2}\\[-0.1cm]
            &+61550499069700173478724063089387654812308400W\vphantom{W^2}\\[-0.1cm]
            &+3193966974265623365398753846860968247266969720956505401600\vphantom{W^2}.
    \end{align*}
  \end{footnotesize}
  We next compute each of the $120$ tritangent planes explicitly, in the format
  (\ref{eq:tritangent}). For instance, the tritangent that arises from the line
  spanned by the points
  $ (10\! : \!11\! : \!1)$ and
  $ (27\! : \!2\! : \!17)$ in $\mathcal{P}$
  is found to be
  \begin{small}
    \begin{align*}
      &  \,345059077005W \,-\,153208173277626716984179949t^2
      \\ &
           \! +277165925195542929496239488t
           \,  - 2613400142391424482367340.
    \end{align*}
  \end{small}
  We now have a list of $120$ such polynomials. Each of these
  intersects the curve $C$ in three complex points
  with multiplicity two in the $(t,W)$-plane.
  All of these complex points are found to be real.
\end{example}

\begin{example} \label{ex:84}
  A similar computation verifies that
  the following configuration of eight points
  gives $84$ totally real tritangents:
  \begin{align*}
    \mathcal P \;=\; \bigl\{\;&(-12: 9: 11)\,,\,\,
                                (7: -5: -7)\,, \,\, (1: 3: 3) \, , \,\, (2: 2: -1) \, ,  \\
                              &(-2: 2: 1) \, , \,\,  (1: 3: 1) \,, \,\, (3: 3: 2) \,, \,\, (8: -8: -7)\; \bigr\} \;\subset \;\PP^2_\RR.
  \end{align*}
\end{example}

\begin{proof}[Proof of Theorem \ref{thm:main}]
  The $120$ tritangent planes arising from the construction above
  correspond to the odd theta characteristics of $C$. They are tritangent to $C$ but
  they do not pass through the singular point $(0:0:0:1)$ of the quadric
  $V(x_0 x_2 - x_1^2)$ in $\PP^3$. Each such tritangent is an isolated
  regular solution to the polynomial equations that define the tritangents of $C$.
  These equations are described explicitly as the tritangent ideal in
  Section~4. We may perturb the equation $x_0 x_2 - x_1^2$ to obtain a new curve $C'$.
  By the Implicit Function Theorem, for each tritangent $H$ of $C$ there is a nearby
  tritangent plane $H'$ of $C'$. Moreover, if the perturbation is sufficiently small
  and the three points of $C \cap H$ are real and distinct, then
  $C' \cap H'$ also consists of three distinct real points.
  Conversely, if two points of $C \cap H$ are distinct and complex conjugate, then
  two points of $C' \cap H'$ will also be distinct and complex conjugate.

  Hence, if our blow-up construction gives $m$ totally real tritangents
  for some $ m \leq 120$ then that same number of real solutions persists throughout some
  open semialgebraic subset in the space $\PP^9_\RR \times \PP^{19}_\RR$ of
  pairs $(Q,K)$ of a real quadric and a real cubic in $\PP^3$.

  Examples \ref{ex:84} and \ref{ex:120} exhibit configurations
  with $m=84$ and $m=120$. Every integer $m$
  between these two values can be realized as well.
  We verified that assertion computationally,
  by constructing a configuration $\mathcal{P}$
  in $\PP^2_\QQ$ for every integer between $84$ and $120$.
\end{proof}

\begin{remark}
	It may be possible to prove by hand that every integer $m$
	between $84$ and $120$ is realizable. The idea is to
	connect the two extreme configurations with a
	general semialgebraic path in $\PP_\RR^9 \times \PP_\RR^{19}$.
	That path crosses the {\em tritangent discriminant} $\Delta_2$
	(cf.~Section 5) transversally.
	At such a crossing point, precisely one of the $120$ configurations marked
	8, 28, 56 or 56/2 fails to have its three intersection points distinct. This means
	that the number of real triples changes by exactly one. So,
	the number of totally real tritangents of the associated space sextic
	changes by exactly one. This is not yet a proof because the path
	might cross the discriminant $\Delta_1$.
\end{remark}

\section{Space Sextics with Fewer Ovals}

In Section 2 we started with eight points in the real projective plane $\PP_\RR^2$.
Here we generalize by taking a configuration $\mathcal{P}$
in the complex projective plane $\PP_\CC^2$ that is invariant under complex conjugation.
This also defines a real curve $C$ in $V(x_0 x_2 - x_1^2 ) \subset \PP_\RR^3$.
To be precise, for $s \in \{1,2,3,4,5\}$, let $\mathcal{P}$ consist of
$2s-2$ real points and $5-s$ complex conjugate pairs.
Such a configuration of eight points defines a real del Pezzo surface $X_\mathcal{P}$.
Additionally, the map $\PP^2 \dashrightarrow \PP^3$
and its branch curve $C$ are defined over $\RR$.
The space sextic $C$ has $s$ ovals and it is not of dividing type
when $s \leq 4$. By, \cite[Proposition 3.1]{HL18}, the number of real tritangents of $C$ equals $2^{s+2}$.
For curves which come from the construction in Section~2, we can derive this number by
examining how complex conjugation acts on
the special curves in $\PP^2_\CC$ we had associated with the point configuration $\mathcal{P}$:

\begin{enumerate}
\item[8:] The exceptional fiber over a point $P_i$ defines a real tritangent if and only if
  the point $P_i$ itself is real.
\item[28:] This tritangent is real if and only if the pair $\{P_i,P_j\}$ is real, i.e.~either
  $P_i$ and $P_j$ are both real, or $P_j$ is the conjugate of $P_i$.
  Among the $28$ pairs, the number of real pairs is thus
  ${\bf 4}=0+4$,  ${\bf 4}=\binom{2}{2}+3$, ${\bf 8} = \binom{4}{2}+2$ and
  ${\bf 16} =  \binom{6}{2}+1$ for $\,s=1,2,3,4$.
\item[56:] This tritangent is real if and only if the triple of singular points in
  the quartic is real. This happens if either the three points are real,
  or there is one real point and a conjugate pair.
  Among the $56$ triples, the number of real triples is thus
  ${\bf 0}$,
  ${\bf 6} = 0+ 2 \cdot 3$,
  ${\bf 12} = \binom{4}{3} + 4 \cdot 2$ ,
  ${\bf 26} = \binom{6}{3} + 6 \cdot 1 $ for $\,s = 1,2,3,4$.
\item[56/2:] In this case, the tritangent is real if and only if the two cubics
  are conjugate, and this happens if and only if
  the pair $\{P_i,P_j\}$ is real. Hence the count is
  ${\bf 4}, {\bf 4}, {\bf 8}, {\bf 16}$, as in the case 28.
\end{enumerate}

For each value of $s \in \{1,2,3,4\}$, if we add up the respective four
numbers then we obtain $2^{s+2}$. For instance, for $s=3$,
the analysis above shows
that $4+8+12+8 = 32$ of the $120 $ tritangents are real.

We wish to know how many of these $2^{s+2}$
real tritangents can be totally real,
as $\mathcal{P}$ ranges over the various types of real configurations.
Our investigations
led to the findings summarized
in Theorem \ref{thm:table1census}.

\begin{theorem} \label{thm:table1census}
  The third row in Table~\ref{tab:fivetypes} lists the ranges of currently known values
  for the number of totally real tritangents
  of real space sextics $C$ that are constructed by blowing up eight points in $\PP^2$:
  \begin{table}[h]
    \vspace{-3mm}
    $$ \begin{matrix}
      s \ \hbox{ovals} && 1 & 2 & 3 & 4 & 5 \\
      \hbox{real} &&
      8 & 16 & 32 & 64  & 120 \\
      \hbox{\centering totally real}
      && [0 ,8] & [1,15] & [10,32] & [35,63]  & [84,120]
    \end{matrix} $$
    \caption{Real and totally real tritangents of a space sextic $C$
      on a singular quadric $Q$, according to number of ovals of $C$.	\label{tab:fivetypes}}\vspace{-5mm}
  \end{table}
\end{theorem}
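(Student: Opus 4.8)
The plan is to separate the two nontrivial rows of Table~\ref{tab:fivetypes}. The middle row, giving the number $2^{s+2}$ of real tritangents, I would treat as already rigorous: it follows from \cite[Proposition 3.1]{HL18} together with the type-by-type count carried out just above the theorem, where for each $s \in \{1,2,3,4\}$ the real contributions from types $8$, $28$, $56$ and $56/2$ sum to $2^{s+2}$, while the case $s=5$ has all $120$ planes real (so the formula $2^{s+2}$ is not expected to hold there, and $s=5$ is handled on its own). Thus the substance of the theorem lies entirely in the third row, which I would read as an \emph{existence} statement: for every integer $m$ in each listed interval there is a conjugation-invariant configuration $\mathcal{P}$ of the prescribed type, namely $2s-2$ real points and $5-s$ complex conjugate pairs, realizing exactly $m$ totally real tritangents.

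By Lemma~\ref{lem:exceptional curves and tritangents}, deciding whether a given real tritangent is totally real reduces to testing whether the three points of $C_1 \cap C_2$ are real on $X_\mathcal{P}$, a semialgebraic condition that can be checked in $\PP^2$ from the explicit special curves (lines, conics, cubics, sextics) attached to $\mathcal{P}$. First I would fix, for each $s$, such a configuration and set up the elimination that returns, for each of the $2^{s+2}$ real tritangents, the number of real points among its three contacts. Next I would realize the two endpoints of each interval by exhibiting explicit configurations, in the spirit of Examples~\ref{ex:120} and~\ref{ex:84}, certifying the count by the test above.

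To fill in the intermediate integers I would then invoke a continuity argument identical to the proof of Theorem~\ref{thm:main} and the Remark following it: move $\mathcal{P}$ along a generic semialgebraic path joining the two extreme configurations so that the path meets the tritangent discriminant $\Delta_2$ transversally. At each transversal crossing exactly one of the $2^{s+2}$ real tritangents has its triple of contact points collide, so its number of real contacts flips between $1$ and $3$; hence the count of totally real tritangents changes by $\pm 1$ and every integer in the interval is swept out.

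The main obstacle is that this is fundamentally a census rather than a closed-form theorem, and two gaps must be acknowledged. The ranges are asserted as \emph{currently known}: while I can certify each endpoint and each interior value by an explicit $\mathcal{P}$, I have no structural argument forcing the upper bounds $15$ and $63$ for $s=2,4$ to be sharp, nor one ruling out the full counts $16$ and $64$. The second, more serious gap is the one flagged in the Remark: a generic path between two configurations may cross the locus $\Delta_1$ where the sextic itself degenerates, rather than only $\Delta_2$, so the clean ``change by exactly one'' argument is not guaranteed along every path. I would therefore present the third row as the outcome of the verified computations at \cite{KNRSscripts}, with the intermediate values justified by the discriminant heuristic and confirmed case by case.
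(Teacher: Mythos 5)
Your proposal matches the paper's treatment of this theorem: the paper gives no formal proof, justifying the middle row by \cite[Proposition 3.1]{HL18} together with the type-by-type count preceding the statement, and the third row purely computationally --- explicit configurations for the endpoints (the $s=1,2,3$ examples and, for $s=5$, Examples~\ref{ex:84} and~\ref{ex:120}) certified via Lemma~\ref{lem:exceptional curves and tritangents}, with the empirical distributions of Figure~\ref{fig:distribution} supplying the $s=3,4$ ranges, and the $\Delta_2$-crossing argument appearing only as the paper's own Remark, with exactly the $\Delta_1$ caveat you flag. One small over-reach: you initially read the third row as asserting that every integer in each interval is realized, which is stronger than the theorem's literal claim of \emph{currently known} ranges (integer-by-integer realization is verified only for $s=5$, in the proof of Theorem~\ref{thm:main}), but your closing paragraph correctly retreats to the paper's actual position.
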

The following examples exhibit some lower and upper bounds.

\begin{example}[$s=1$] \rm
  Let $\mathcal{P}$ be the following configuration in $\PP^2_\CC$:
  \vspace{-1mm}\begin{align*}
    P_1 =&\; ( i: 1-i: 0), & P_2 &= \overline{P_1},  \\
    P_3 =&\; ( 2-i : -3-i : 3+i ), & P_4 &= \overline{P_3},  \\
    P_5 =&\; ( 2-i : 1-i : -2-i ),  & P_6 &= \overline{P_5}, \\
    P_7 =&\; ( 4i : -i : 4 ), & P_8 &= \overline{P_7}.
  \end{align*}
  The curve $C$ consists of only one oval in $\PP^3_\RR$.
  One checks that {\bf none} of the eight real tritangents of $C$ is
  totally real, i.e.~no plane is tangent to $C$ at three real points.
  On the other hand,
  for the following configuration, {\bf all} eight real tritangents are
  totally real:
  \vspace{-1mm}\begin{align*}
    P_1 & = ( i : 0 : 1 ), &  P_2  & = \overline{P_1}, \\
    P_3  & = (1-3i : -3+2i : 1),  &  P_4 & = \overline{P_3}, \\
    P_5 & = (0 : 2+3i : -3-2i),  &  P_6  & = \overline{P_5}, \\
    P_7 & = (4i : -3+4i: 1+i),  &  P_8  & = \overline{P_7}.
  \end{align*}
\end{example}

\begin{example}[$s=2$] \rm
  We fix the following configuration $\mathcal{P}$
  of two real points and three pairs of complex conjugate points in $\PP^2$:
  \vspace{-1mm}\allowdisplaybreaks\begin{align*}
    P_1 &= (1:   -2i:  2i),  &    P_2 &= \overline{P_1},  \\
    P_3 &= (1: 3+2i:  -3i),  &   P_4 &= \overline{P_3},  \\
    P_5 &= (1+2i:  4+2i: -4+i), &    P_6 &= \overline{P_5}, \\
    P_7 &= (1:   0: -1),  & P_8 &= (0: 4: 1).
  \end{align*}
  The associated curve $C$ has two ovals. Of its $16$ real tritangents,
  {\bf exactly one} is totally real.
  By a random search, we found examples where up to $15$
  of the real tritangents of the curve $C$ are totally real.
  At present, we have not found any $\mathcal{P}$ where the associated
  curve has either $0$ or $16$ totally real tritangents.
\end{example}

Figure~\ref{fig:distribution} shows the empirical distribution we observed
for $s=3$ (left) and $s=4$ (right). The respective ranges are $[10,32]$ and
$[35,63]$.

\begin{figure}[h!]
  \centering\vspace{-3mm}
  \begin{tikzpicture}
    \node[anchor=south east] at (0,0)
    {\includegraphics[height=0.5\linewidth,height=3cm]{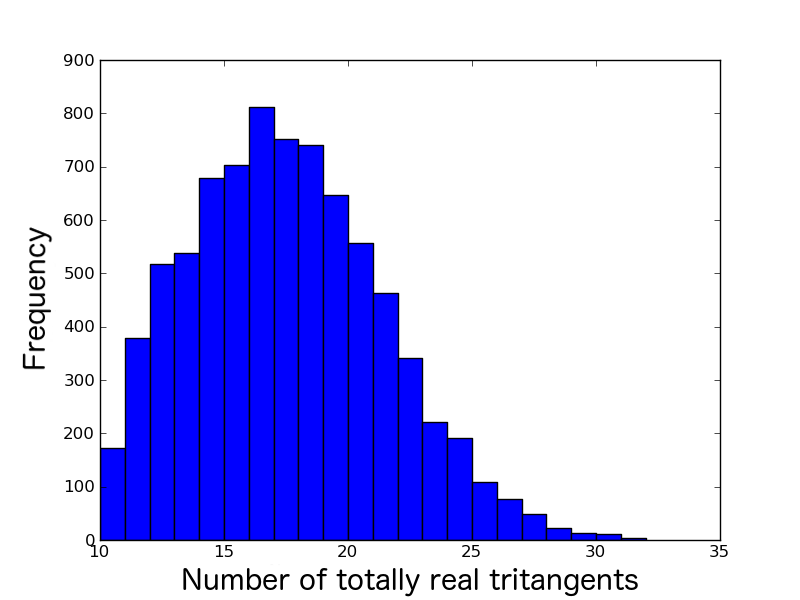}};
    \node[anchor=south west] at (0,0)
    {\includegraphics[width=0.48\linewidth,height=3cm]{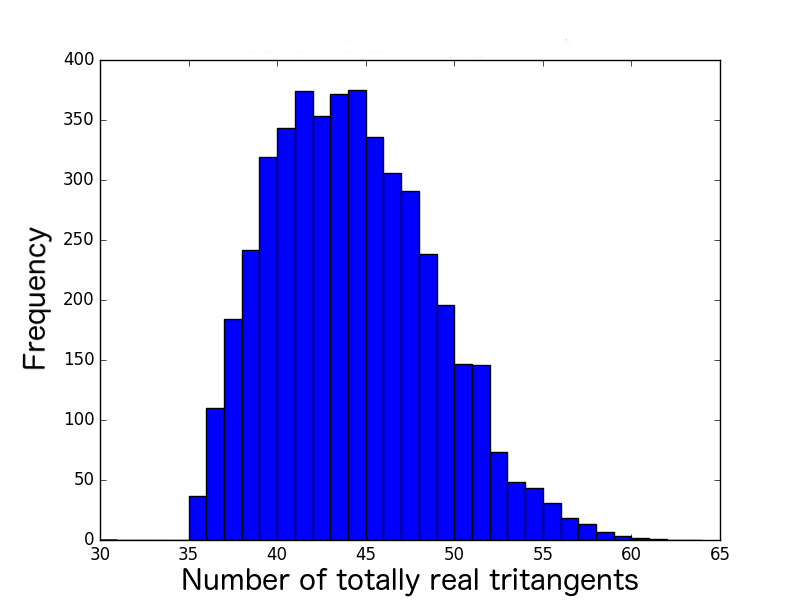}};
  \end{tikzpicture}\vspace{-3mm}
  \caption{Count of totally real tritangents for
    $s=3$ and $s=4$. }\label{fig:distribution}
\end{figure}

\begin{example}[$s=3$] \rm
  The following configuration $\mathcal{P}$ gives a space sextic
  $C$ with three ovals that has
  $32$ totally real tritangents:
  \begin{small}
    \begin{align*}
      P_1 = &\;(- 204813760 -55982740i  : 452442430 + 319792532i  : 1), \; P_2 = \overline{P_1}, \\
      P_3 = &\;( 252002303 -508295920i : 418802957 + 255990940i : 1), \; P_4 = \overline{P_2}, \\
      P_5 = &\;(420794066 : 346448315 : 1), \; P_6 = (64527687 : 183049780 : 1), \\
      P_7 = &\;(410335352 : 364471450 : -1), \; P_8 = (210806629 : 146613813 : -1) .
    \end{align*}
  \end{small}
\end{example}

\section{Solving the Tritangent Equations}

In Sections 2 and 3 we studied
space sextics $C$ lying on a singular quadric surface $Q$.
By perturbing these, we obtained generic space sextics
with many different numbers of totally real tritangents.
However, not all numbers between $0$ and $120$ were attained
by this method. To remedy this, we considered
arbitrary space sextics $C = Q \cap K$,
defined by a random quadric $Q$ and a random cubic $K$.

However, we found the problem of computing the tritangents directly from $(Q,K)$
to be quite challenging. We conjecture that all integers between $0$ and $120$
can be realized by the totally real tritangents of some space sextic.
But, at present, some gaps in Table~\ref{tab:fivetypes} persist.

\smallskip

In what follows we describe our algorithm -- and its implementation --
for computing the $120$ tritangents directly from
the homogeneous polynomials of degree two resp.~three
in $x_0,x_1,x_2,x_3$ that define the quadric $Q$ resp.~the cubic $K$.
We introduce four unknowns $u_0,u_1,u_2,u_3$ that
serve as coordinates on the space $(\PP^3)^\vee$ of planes:
\begin{equation}
  \label{eq:P}
  H \,\,: \,\, u_0 x_0 + u_1 x_1 + u_2 x_2 + u_3 x_3 \,=\, 0.
\end{equation}
For generic real values of the $u_i$, the
intersection $  Q \cap K \cap H  =   C \cap H$ consists of
six distinct complex points in $\PP^3$. We are interested in the special
cases when these six points become three double points.
We seek to find the  {\em tritangent ideal} $\mathcal{I}_C$,
consisting of polynomials in $u_0,u_1,u_2,u_3$
that vanish at those $H$ that are tritangent planes of $C$.

We fix the projective space $\PP^6$ whose points are the binary sextics
$$ f \,\,= \,\, a_0 t_0^6 + a_1 t_0^5 t_1 + a_2 t_0^4 t_1^2
+ a_3 t_0^3 t_1^3 + a_4 t_0^2 t_1^4 + a_5 t_0 t_1^5 + a_6 t_1^6. $$
Inside that $\PP^6$ we consider the threefold of squares of binary cubics:
\begin{equation}
  \label{eq:square}
  f \,\, = \,\, \bigl(b_0 t_0^3 + b_1 t_0^2 t_1 + b_2 t_0 t_1^2 + b_3 t_1^3\bigr)^2.
\end{equation}
The defining prime ideal of that threefold is minimally generated by
$45$ quartics in $a_0,a_1,a_2,a_3,a_4,a_5,a_6$.
This is revealed by the row labeled $\lambda = (2,2,2)$ in \cite[Table 1]{LS16}.
Computing these $45$ quartics is a task of elimination, which
we carried out in a preprocessing step.

Consider now a specific instance $(Q,K)$, defining $C = Q \cap K$.
We then transform the above $45$ quartics in $a_0,\ldots,a_6$ into
higher degree equations in $u_0,\ldots,u_3$.
This is done by projecting $C \cap H$ onto a line.
This gives a univariate polynomial of degree six whose seven coefficients
are polynomials of degree $12$ in $u_0,u_1,u_2,u_3$.
We replace $a_0,\ldots,a_6$ by these polynomials.
Theoretically, it suffices to project onto a single generic line.
Practically, we had more success with multiple (possibly degenerate) projections
onto the coordinate axes, and gathering the resulting systems of $45$ equations each.

To be more precise, fix one of the $12$ ordered pairs $(x_i,x_j)$.
First, solve the equation (\ref{eq:P}) for $x_i$, substitute
into the equations of $Q$ and $K$, and clear denominators.
Next, eliminate $x_j$ from the resulting ternary quadric and cubic.
The result is a binary sextic $f$ in the two unknowns $\{x_0,x_1,x_2,x_3\} \backslash \{x_i,x_j\}$
whose coefficients $a_0,\ldots,a_6$ are expressions of degree $12$ in $u_0,\ldots,u_3$.
We substitute these expressions into the $45$ quartics precomputed above.
This results in $45$ polynomials of degree $48$ in $u_0,\ldots,u_3$
that lie in the tritangent ideal $\mathcal{I}_C$.
Repeating this elimination process for the other $11$ pairs $(x_i,x_j)$,
we obtain additional polynomials in $\mathcal{I}_C$.
Altogether, we have now enough polynomials of degree $48$ to generate
$\mathcal{I}_C$ on any desired affine open subset
in the dual $(\PP^3)^\vee$ of planes in $\PP^3$.
The homogeneous ideal $\mathcal{I}_C$ is radical
and it has $120$ zeros in $(\PP^3)^\vee$.

To compute these zeros, we restrict ourselves to an open chart, say $U=\{u_3\neq 0\}
\simeq \mathbb{C}^3$.
The resulting system (with $u_3=1$) is grossly over-constrained,
with up to $12 \times 45$ equations in the three unknowns $u_0,u_1,u_2$.
We compute a lexicographic Gr\"obner basis, using \texttt{fglm} \cite{FGLM97}, as our ideal is zero-dimensional.
For generic instances $(Q,K)$,  the lexicographic Gr\"obner basis has the shape
\begin{equation}
  \label{eq:shapelemma} \bigl\{\, u_1-p_1(u_3), \,u_2-p_2(u_3),\, p_3(u_3) \,\bigr\},
\end{equation}
where $\deg(p_3) = 120$ and $\deg(p_1) = \deg(p_2) = 119$. For degenerate $(Q,K)$ we proceed with a triangular decomposition.

We implemented this method in \textsc{magma} \cite{magma}.
The Gr\"obner basis computation was very hard to carry out.
It took several days to finish for Example~\ref{ex:Emch}. The
output had coefficients of size ${\sim}10^{680}$.

We applied our implementation to several curves $C$, some from
configurations $\mathcal{P} \subset \PP^2_\QQ$, and some from
general instances $(Q,K)$.

The first case is used as a tool for independent verification,
e.g.~for Example \ref{ex:120}. Here,
$p_3$ decomposes into linear factors over $\QQ$.
Each factor yields a rational tritangent, for which we compute
the three (double) points in $H \cap C$ symbolically. To check whether one or three are real,
we again project onto a line. This yields a univariate rational polynomial of degree 6. We can test whether it is the square of a cubic with positive discriminant. More generally, any non-linear factor with only real roots also allows us to continue our computations symbolically over an algebraic field extension.

In the second case, the univariate polynomial $p_3$ is typically irreducible over $\QQ$, and we
solve (\ref{eq:shapelemma}) numerically.
We compute all real tritangents $H$ and their intersections $H\cap C$.
Based on the resulting numerical data, we decide which $H$ are totally real. Complex zeroes are also counted, to attest that there are indeed $120$ solutions. This certifies that the chosen open chart $U$ was indeed generic.

\begin{example} \label{ex:120b}
  The polynomial $C(t,W)$ in Example~\ref{ex:120} translates
  into a cubic $K(x_0,x_1,x_2,x_3)$ which is unique modulo the quadric
  $Q = x_0 x_2 - x_1^2$. We apply the algorithm above to the instance
  $(Q,K)$ with $U=\{u_3\neq 0\}$. The result verifies that
  all $120$ tritangents are rational and totally real. Interestingly, two of the
  $120$ tritangents have a coordinate that is zero. These two special planes are
  \vspace{-1mm}\begin{align*}
    0 \;&=\; 666727858907928630542805134887161895157 u_0\\
    & \qquad -371406861222752391050720128495402169926 u_1\\
    & \qquad -13148859997292971155483015 u_3
  \end{align*}
  and
  \begin{align*}
    0 \; &=\; 7984878906436628716387308745543788472 u_1 \\
    & \qquad -4446108899575055719305582305633616071 u_2 \\
    & \qquad +10689705055237706452395 u_3.
  \end{align*}
\end{example}

\begin{example}\label{ex:Emch}
  The curve $\,C = Q \cap K\,$ in \cite[\S 3]{HL18} is given by
  \begin{align*}
    Q &\; =\;  x_0^2+x_1^2+x_2^2-25x_3^2,\\
    K &\; =\; \left(x_0+\sqrt 3 x_3\right)\left(x_0-\sqrt 3x_1-3 x_3\right)\left(x_0+\sqrt 3 x_1-3 x_3\right)-2 x_3^3.
  \end{align*}
  It has five ovals, so all tritangents are real. Our computation shows that
  there are only $108$ distinct tritangents. Twelve are solutions
  of multiplicity two in the ideal $I_C$, and none of the tritangents are rational.
  This verifies \cite[Theorem 3.2]{HL18}.
  Figure~\ref{fig:HarrisLenTritangents} shows three tritangents,
  meeting $3$, $2$ and $1$ ovals of the red curve
  respectively.
\end{example}

\begin{figure}[h!]
  \centering
  \begin{tikzpicture}
    \node (a) at (0,0) {\includegraphics[width=0.3\linewidth]{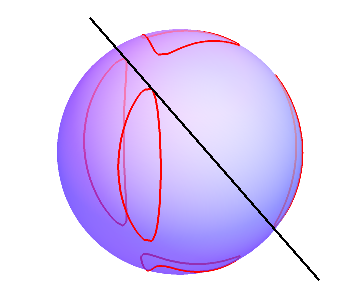}};
    \node[anchor=west] (b) at (a.east) {\includegraphics[width=0.3\linewidth]{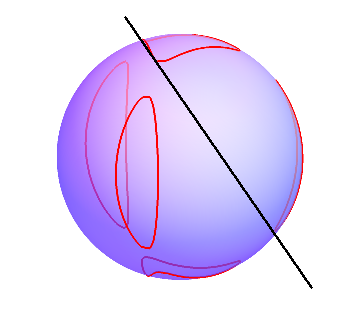}};
    \node[anchor=west] (c) at (b.east) {\includegraphics[width=0.3\linewidth]{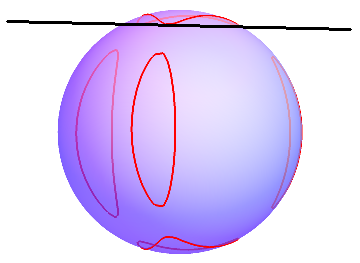}};
  \end{tikzpicture}\vspace{-0.5cm}
  \caption{The curve in Example~\ref{ex:Emch} has $108$ real tritangents}
  \label{fig:HarrisLenTritangents}
\end{figure}

In \cite[Question 3.3]{HL18}, Harris and Len asked whether
this example can be replaced by one with $120$ distinct totally teal
tritangents. Our computations in Examples \ref{ex:120} and \ref{ex:120b}
establish the affirmative answer. However, we do not yet know
whether all integers between $0$ and $120$ are possible
for the number of totally real tritangents.

\section{Discriminants}

In this paper we considered two parameter spaces for space sextics.
First, there is the space $\PP_\RR^9 \times \PP_\RR^{19}$ of pairs
$(Q,K)$ consisting of a real quadric and a real cubic in $\PP^3$.
The regions for which the number of real tritangents remains constant partitions
 $\PP^9_\RR \times \PP_\RR^{19}$ into open strata. This stratification is refined by
 regions for which the number of totally real tritangents remains constant.
We are interested in the discriminantal hypersurfaces
that separate these strata.

Second, there is the space $(\PP_\RR^2)^8$ of configurations $\mathcal{P}$ of
eight labeled points in the plane. This space works for any fixed
value of $s$ in $\{1,2,3,4,5\}$, representing configurations
of $2s-2$ real points and $5-s$ complex conjugate pairs.
For simplicity of exposition we focus on the fully real case $s=5$. In any case,
the number of real tritangents is fixed, and we care about the open strata
in $(\PP_\RR^2)^8$ in which the number of totally real tritangents is constant.
Again, we seek to describe the discriminantal hypersurface, but now in $(\PP^2)^8$.

For $\mathcal{P} \in (\PP^2)^8$, denote the associated space sextic by $C_\mathcal{P}$.
Let $\Sigma$ denote the locus of configurations in $(\PP^2)^8$ which are not in general position. We define the \emph{tritangent discriminant locus } by
\[
  Y \,=\, \overline{\left\{ \,\mathcal{P} \in (\PP^2)^8 \backslash \Sigma:
      \parbox{4.3cm}{ $C_\mathcal{P}$ has a tritangent with contact \\ order at least $4$ at some point} \right\}},
\]
where the over-line denotes the Zariski closure.

\begin{lemma}
  Every irreducible component of $Y$ is a hypersurface.
\end{lemma}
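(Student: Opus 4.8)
The plan is to exhibit $Y$ as the zero locus of a single discriminant section on $(\PP^2)^8$ and then apply Krull's principal ideal theorem. By Lemma~\ref{lem:exceptional curves and tritangents}, each of the $120$ tritangents of $C_\mathcal{P}$ is cut out by a pair $\{C_1,C_2\}$ of exceptional curves of type 8, 28, 56 or 56/2, its three contact points being the $\varphi$-images of the three points of $C_1 \cap C_2$ on $X_\mathcal{P}$, and the plane is tangent, i.e.\ has contact order at least $2$, at each of these points. Since the total contact multiplicity of $H$ along $H \cap Q$ is $6$, three distinct contact points must carry multiplicities $2+2+2$; hence a tritangent acquires contact order at least $4$ at some point if and only if two of the three points of $C_1 \cap C_2$ coincide. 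Consequently $Y$ is the closure, inside $(\PP^2)^8 \setminus \Sigma$, of the locus where, for at least one of the $120$ pairs, the three intersection points of $C_1 \cap C_2$ fail to be distinct.

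First I would fix one pair $\{C_1,C_2\}$ and manufacture an honest discriminant. Over $(\PP^2)^8 \setminus \Sigma$ the plane curves $C_1,C_2$ (the line and quintic, conic and quartic, or the two cubics) have coefficients that are polynomials in the Cox coordinates of $\mathcal{P}$, since they solve linear conditions whose solution is given by minors. Eliminating the base points in $\mathcal{P}$ from $C_1 \cap C_2$ and projecting the three residual points to a generic line produces a binary cubic $g_{\{C_1,C_2\}}$ whose coefficients are again polynomials in $\mathcal{P}$. Its discriminant $\Delta_{\{C_1,C_2\}} := \operatorname{disc}\bigl(g_{\{C_1,C_2\}}\bigr)$ is then a multihomogeneous section of a line bundle $\mathcal{O}(d_1,\dots,d_8)$ on $(\PP^2)^8$, vanishing at $\mathcal{P} \in (\PP^2)^8 \setminus \Sigma$ exactly when two of the three contact points of this tritangent collide. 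Setting $\Delta := \prod_{\{C_1,C_2\}} \Delta_{\{C_1,C_2\}}$ over all $120$ pairs yields
\[
  Y \;=\; \overline{\,V(\Delta)\,\cap\,\bigl((\PP^2)^8 \setminus \Sigma\bigr)\,}.
\]

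Next I would verify that $\Delta$ is a nonzero nonunit. Here Example~\ref{ex:120} does the work: for that configuration all $120$ tritangents touch $C$ at three distinct points, so $\Delta(\mathcal{P}) \neq 0$ there and therefore $\Delta \not\equiv 0$ on $(\PP^2)^8$; on the other hand $\Delta$ is nonconstant and so has a nonempty zero locus. By the principal ideal theorem, applied in the regular local rings of the smooth variety $(\PP^2)^8$, every irreducible component of $V(\Delta)$ has codimension exactly $1$. Finally, $Y$ is the closure of $V(\Delta)$ intersected with the open set $(\PP^2)^8 \setminus \Sigma$, hence $Y$ is precisely the union of those irreducible components of $V(\Delta)$ that are not contained in $\Sigma$. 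Each such component is a codimension-one component of $V(\Delta)$, so every irreducible component of $Y$ is a hypersurface.

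The main obstacle is the first step: checking uniformly over the family that ``contact order at least $4$'' is detected by the vanishing of the single section $\Delta_{\{C_1,C_2\}}$, and in particular that the generic projection used to form $g_{\{C_1,C_2\}}$ neither creates nor destroys coincidences among the three contact points. One must also rule out degenerate behavior in which $C_1$ or $C_2$ becomes reducible or singular along a positive-dimensional subfamily, which is exactly what passing to $(\PP^2)^8 \setminus \Sigma$ excludes; keeping careful track of the base points in $\mathcal{P}$ throughout the elimination is the technical heart of the argument.
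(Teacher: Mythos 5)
Your core idea coincides with the paper's: a tritangent acquires a contact point of order at least $4$ exactly when the discriminant of the binary cubic recording its three contact points vanishes, and since that discriminant depends algebraically on the configuration, $Y$ is locally cut out by one equation and so has codimension one. The paper runs this \emph{locally}: near a fixed $\mathcal{P}_0 \in Y \setminus \Sigma$ it writes the curve and the tritangent in the $(t,W)$-model as in (\ref{eq:ctW}) and (\ref{eq:htW}), forms $\mathrm{Res}_W(c,h) = f = g^2$, and observes that near $\mathcal{P}_0$ the locus $Y$ is given by $\operatorname{disc}(g)=0$. You instead try to build, for each of the $120$ pairs, a single \emph{global} multihomogeneous section $\Delta_{\{C_1,C_2\}}$ on $(\PP^2)^8$ and invoke Krull on the product $\Delta$. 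The globalization is where your proof has a genuine gap, which you flag yourself but do not close: your cubic $g_{\{C_1,C_2\}}$ comes from projecting the three residual points of $C_1\cap C_2$ to a line \emph{fixed once and for all}, and for special configurations two \emph{distinct} residual points can project to the same point of that line, making $\Delta_{\{C_1,C_2\}}$ vanish with no contact-order degeneration. Your Krull argument needs the set-theoretic equality $Y\cap\bigl((\PP^2)^8\setminus\Sigma\bigr)=V(\Delta)\cap\bigl((\PP^2)^8\setminus\Sigma\bigr)$; spurious vanishing only gives the inclusion $\subseteq$, and then a component of $Y$ could a priori sit as a proper subvariety inside a codimension-one component of $V(\Delta)$, so the conclusion does not follow. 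The paper's projection is immune to this because it is adapted to the tritangent: on the plane section $h = t^2+e_1t+e_2+e_3W$ with $e_3\neq 0$, the coordinate $W$ is a graph over $t$, so the contact points are separated by their $t$-coordinates, and $\operatorname{disc}(g)=0$ detects contact order at least $4$ exactly. The cheapest repair of your argument is precisely this localization: at a generic point of each component of $Y$, choose a projection injective on the residual scheme (or, canonically, restrict $C_2$ to the rational curve $C_1$ for the types 28, 56 and 56/2), obtaining one honest local equation --- which is, in effect, the paper's proof.

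A second, smaller omission: your explicit recipe (``two plane curves, eliminate base points, project'') silently drops the eight type-8 pairs, where $C_1$ is an exceptional fiber and not a plane curve; there the correct binary cubic is the degree-three tangent cone of the sextic at its triple point $P_i$, whose discriminant is again polynomial in the coordinates of $\mathcal{P}$, so this is easily patched --- and note that the paper's uniform format (\ref{eq:htW}) covers type 8 automatically, which is an advantage of working on the quadric rather than in $\PP^2$. Your remaining ingredients are sound: the multiplicity count forcing $2+2+2$ at three distinct contact points (though the degree-six divisor is $H\cap C$, not $H\cap Q$ as you wrote), and the use of Example~\ref{ex:120} to certify $\Delta\not\equiv 0$, since there all $120$ tritangents have three distinct contact points.
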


\begin{proof}
	Let $\mathcal{P}_0 \in Y \setminus \Sigma$, and fix local coordinates $\bar p = (p_1, \ldots, p_{16})$ for a neighborhood $\mathcal{U}$ of $\mathcal{P}_0 = \bar p_0$ in $(\PP^2)^8$. The bivariate
	equation (\ref{eq:explicitC}) that represents $C_0$ is
	the specialization at $\bar p_0$ of a general equation
	\begin{equation}\label{eq:ctW}
    \begin{aligned}
      C : \quad & c(t,W) \,=\,  t^6 \! + \! d_1(\bar p) t^5\! + \! d_2(\bar p) t^4W\! + \cdots + \! d_{15}(\bar p),
    \end{aligned}
	\end{equation}
	where the coefficients $d_i(\bar p)$ are rational functions regular at $p_0$. Let $H_0$ be a tritangent plane to $C_0$ with a contact point of order at least $4$. Then $H_0$ is either the tritangent associated to a point in $\mathcal{P}_0$ or associated to one of the patterns in Figure \ref{fig:type28}, \ref{fig:type58} or \ref{fig:type58/2}. Either way, we see that $H_0$ is obtained by specializing an equation of the form
	\begin{equation}\label{eq:htW}
    H\,\colon \,\, \, h(t,W) \,=\, t^2 \,+ \, e_1(\bar p) t \,+\,e_2(\bar p) \,+\, e_3(\bar p) W ,  \qquad
	\end{equation}
	where the coefficients $e_i(\bar p)$ are rational functions regular at $\bar p_0$.

  The resultant of $c(t,W)$ and $h(t,W)$ with respect to $W$ is a polynomial $f(t)$ of degree $6$
  whose coefficients are rational functions in $\bar p$. Note that $H$ is a tritangent plane to $C$,
  so $f = g^2$ as in (\ref{eq:square}).	 The roots of the cubic $g$ correspond to the contact points of $H$ with $C$. In particular, $H_{\bar p}$ has a point of contact with $C_{\bar p}$ of order at least $4$ precisely when the discriminant of $g$ is zero. Since the coefficients of $g$ are rational functions in $\bar p$,
  regular at $\bar p_0$, this means that a neighborhood of
  $\mathcal{P}_0$ in $Y$ has codimension $1$ in $\mathcal{U}$.
  This implies that every irreducible component of $Y$ has codimension $1$.
\end{proof}

The following theorem describes these irreducible components:

\begin{theorem} \label{thm:discriminant}
	The tritangent discriminant locus
	$Y$ is the union~of $120 = 8+28+56+56/2$ irreducible hypersurfaces
	in $(\PP^2)^8$,	one for each point in $\mathcal{P}$ and each pattern
	in Figures \ref{fig:type28}, \ref{fig:type58} and \ref{fig:type58/2}.
	The components of type 8 have total degree $306$, namely $54$
	in the point corresponding to the exceptional curve and $36$ in the other seven points.
	The components of type 28 have total degree $216$, namely $18$
	in each of the two points on the line and $30$ for the six on the quintic.
	The components of type 56 have total degree $162$, namely $18$
	in each of the five points on the conic and $24$ for the three on the quartic.
	The components of type 56/2 have total degree $144$, namely $18$
	in each of the eight points.
\end{theorem}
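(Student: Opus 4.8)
The plan is to prove the theorem one pattern-type at a time, computing each discriminant component as an explicit polynomial in the point coordinates and then verifying its degree by counting. For each of the four types, the recipe is the same: the pair $\{C_1,C_2\}$ of special plane curves attached to a point or pattern has, by the construction of Section~2, three intersection points in $\PP^2\setminus\mathcal{P}$, whose image under $\varphi$ is the triple of contact points of the corresponding tritangent. A contact point of order at least $4$ occurs exactly when two of these three intersection points collide, equivalently when the cubic $g$ from the Lemma acquires a double root. So for each type I would write down the defining equations of $C_1$ and $C_2$ as polynomials whose coefficients depend on $\mathcal{P}$ (these are determined by the vanishing conditions listed in items 8, 28, 56, 56/2), eliminate to obtain the cubic $g(t)$ recording the three contact parameters, and then take the discriminant $\Delta(g)$. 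The vanishing of this discriminant cuts out the component, and its bidegree in the various groups of points gives the claimed total degrees.

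The key computational step is to track how the coefficients of $g$ depend on the point coordinates, so that the degree bookkeeping can be carried out. Here I would exploit the intersection-theoretic description on $X_\mathcal{P}$ together with \cite[Table~1]{TVV09}: the multidegree of each special curve in $\PP^2$ (and the way it vanishes, simply or doubly, at each $P_k$) is exactly what controls the degree of $\Delta(g)$ as a function of each $P_k$. For instance, for type 56/2 the two curves are conjugate cubics that vanish doubly at one point and simply at six of the remaining seven; the symmetry of the two cubics under swapping $P_i\leftrightarrow P_j$, and the fact that each of the eight points enters symmetrically, should force the degree in every point to be equal, giving the stated $18$ per point and total $144=8\times 18$. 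The asymmetric types (8, 28, 56) split the points into a distinguished group lying on the first curve and a complementary group lying on the matching curve, and the degrees $54/36$, $18/30$, $18/24$ reflect these two roles.

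I would organize the degree computation as follows. First, fix a type and parametrize a generic $\mathcal{P}$ so that the coefficients of $C_1,C_2$ are polynomials of known degree in the entries of each $P_k$; the vanishing conditions at the other points are linear in the coefficients of each curve, so standard linear-algebra (a maximal-minors computation) expresses those coefficients as polynomials whose multidegree I can read off. Second, form the resultant eliminating one variable to obtain $g$, and then $\Delta(g)$; since resultant and discriminant have predictable degrees in the input coefficients, the multidegree of $\Delta(g)$ in each $P_k$ follows by composition. Third, I would confirm irreducibility of each component, most cleanly by a direct Gr\"obner-basis factorization check on a representative in the available scripts \cite{KNRSscripts}, and confirm the count $8+28+56+56/2=120$ matches the number of tritangents. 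The verification that the components are exactly these $120$ hypersurfaces, and no spurious extra ones appear, follows from the Lemma: every point of $Y\setminus\Sigma$ arises from one of the four patterns, so $Y$ is contained in their union, and each $\Delta(g)=0$ locus is genuinely a component because it meets $Y\setminus\Sigma$ in codimension one.

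The main obstacle I expect is the degree bookkeeping in the presence of \emph{double} vanishing conditions, which are nonlinear and can inject extra, unwanted factors (for example powers of the coordinates of a single point, or of distances between pairs of points) into the naive resultant. Extracting the honest discriminant component means stripping off these extraneous factors, exactly as the construction in Example~\ref{ex:120} strips off $\langle u,v\rangle^2$ by a saturation. Getting the \emph{total} degree right therefore hinges on identifying and removing precisely the right parasitic factors before reading off the multidegree, and I anticipate this saturation-and-cleanup step, rather than any conceptual difficulty, to be where the real work lies.
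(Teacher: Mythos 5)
Your setup agrees with the paper's: by the Lemma, $Y$ is covered by the $120$ pattern components, each pattern yields a cubic $g$ whose roots are the contact parameters, and the component is the closure of the locus where $\mathrm{disc}(g)$ vanishes. The genuine gap is in the step that is supposed to produce the numbers $306$, $216$, $162$, $144$. Composing degree formulas for resultants and discriminants only bounds the multidegree of $\mathrm{disc}(g)$ from above: the eliminant is divisible by parasitic factors supported on the base points of the two linear systems (where $C_1$ and $C_2$ both vanish, doubly at the distinguished points), and the theorem's degrees are exactly what survives after those factors are stripped out. You acknowledge this and defer it as a ``saturation-and-cleanup step,'' but that deferred step \emph{is} the content of the theorem: your proposal contains no method for identifying the extraneous factors or their multidegrees, so the stated numbers are never actually derived. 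The fallback you offer --- computing each component as an explicit polynomial in the $16$ coordinates (total degree up to $306$) and certifying irreducibility by Gr\"obner-basis factorization --- is not computationally realistic, which is precisely why the paper proceeds differently.

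What the paper does instead: since the primitive $\ZZ^8$-homogeneous integer polynomial defining a component keeps its multidegree under reduction modulo a sufficiently large prime, all computations are carried out over $\FF_p$ with $p=10^6+3$. To obtain the degree in a single point, seven of the eight points are frozen at explicit rational values and the remaining one moves along a line $(a:b:0)\simeq \PP^1$; the degree is then read off as the number of parameter values $(a:b)$ at which the corresponding tritangent acquires a contact point of order at least $4$. This is a zero-dimensional point count, feasible with Gr\"obner bases, and it never requires writing down a discriminant polynomial or removing extraneous factors; the genericity hypotheses that make the count exact are isolated in Proposition~\ref{prop:discriminant computations}. To repair your argument you would either have to compute the multiplicity of your eliminant along each parasitic locus (an intersection-theoretic computation you have not set up), or replace the symbolic bookkeeping by this specialize-and-count scheme.
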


We prove Theorem \ref{thm:discriminant} computationally. In order to do so, it is convenient to make the following observation. Let $Y = V(f)$ with $f$ a $\ZZ^8$-homogeneous polynomial of
$\ZZ^8$-degree $(d_1, \ldots, d_8)$. We scale $f$ so that its coefficients are relatively prime integers.
For a prime $p$, let $f_p$ denote the reduction of $f$ modulo $p$.
If $p$ is large, then
\[
  Y_p = V(f_p) \subset (\PP_{\FF_p}^2)^8
\]
has the same $\ZZ^8$-degree as $Y$. We can thus calculate $(d_1,\ldots,d_8)$ by
using Gr\"obner bases over a large finite field $\FF_p$.

Let $k= \FF_p$ be the field with $p=10^6+3$ elements and $k^\mathrm{al}$ its algebraic closure. Let $S = \PP_k^1$ and let $R = k[a,b]$ be the coordinate ring of $S$. Let $\PP_R^2 := \Proj R[x,y,z]$ be the projective plane over $R$. If $X$ is some family,
its specialization to $\, \ab \in S \,$ is denoted $X_\ab$.

We use the following configuration of eight points
in $\PP_R^2$:
\begin{align*}
  \mathcal P = \bigl\{\,&(24 : -23: 57),
                          (11:  25: -27),  (-30:  29: 79),  (14: -23: 26),  \\
                        &(43: 92: 61),   (-34: 81: 7),  (88: 29: 69),  (a: b: 0)\, \bigr\} \subset \PP^2_\RR.
\end{align*}
Note $\mathcal{P}$ is in general position for generic $a,b$. Let $\mathcal{U}$ be the open subset
of $S$ parameterizing specializations   in general position.
The following result concerns generic specializations. We omit the proof.

\begin{proposition} \label{prop:discriminant computations}
	There exists a pair of ternary cubics $u,v \in R[x,y,z]$, a ternary sextic
  $w \in R[x,y,z]$, bivariate polynomials $c,h \in R[t,W]$ as in (\ref{eq:ctW}) and (\ref{eq:htW}),
	and an explicitly computable finite set $X \subset S(k^\mathrm{al})$ such that, whenever $\ab \in \mathcal{U} \backslash X$, the following~hold:
	\begin{enumerate}[label={(\alph*)}]
  \item
		The specializations $u_\ab, v_\ab$ span the space of cubics passing through
		all eight points in $\,\mathcal{P}_\ab$.
  \item	 The specializations $u_\ab^2, uv_\ab, v^2_\ab, w_\ab$ span the space of sextics vanishing doubly at each point in $\mathcal{P}_\ab$.
  \item	 The specialization $	\{ c_\ab(t,W) = 0 \}\,$
		is a smooth genus $4$ curve $C_{a:b}$ lying on a singular quadric surface.
  \item	 The specialization $\, \{h_\ab(t,W) = 0\}\,$
		is a tritangent plane to $\mathcal{C}_\ab$ where the coefficient of $W$ is nonzero.
	\item For any $\ab \in X$, the curve $C_{\ab}$ is smooth, genus $4$, and none of the tritangent
    planes have a point of contact order larger than~$4$.
	\end{enumerate}
\end{proposition}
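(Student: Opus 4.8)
The plan is to carry out the entire construction of Section~2 symbolically over the coordinate ring $R = k[a,b]$ of the parameter line $S = \PP^1_k$, treating the eighth point $(a:b:0)$ as variable while $P_1,\dots,P_7$ stay fixed. Each individual operation in that construction --- forming the ideal $\bigcap_i \mathfrak m_{P_i}$, extracting the cubics $u,v$, passing to the symbolic square to produce $w$, assembling $c(t,W)$ by the elimination formula of Example~\ref{ex:120}, and building one tritangent $h(t,W)$ --- is a rational algebraic procedure whose output is defined, and whose specialization agrees with running the same procedure on the fibre $\mathcal P_\ab$, away from the vanishing locus of a finite list of explicit polynomials in $a,b$ (leading terms, denominators, pivot minors, discriminants). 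I would let $X$ be the union of these vanishing loci intersected with $\mathcal U$; since each is the zero set of a nonzero polynomial in $R$, the set $X \subset S(k^{\mathrm{al}})$ is finite and explicitly computable. Claims (a)--(d) then become assertions that hold off the finite set $X$, while (e) is a finite pointwise verification at the elements of $X$.

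For (a), I would record the $8 \times 10$ matrix $M(a,b)$ whose rows evaluate the ten cubic monomials at $P_1,\dots,P_7,(a:b:0)$. Because the seven fixed points are general and the eighth is generic, a suitable $8\times 8$ minor $\mu(a,b)$ is a nonzero polynomial, so on $\{\mu \neq 0\}$ Cramer's rule yields a basis $u,v$ of $\ker M$ with entries in $R$, and these span the cubics through $\mathcal P_\ab$. For (b) the argument is identical, now with the $24$ double-vanishing conditions (three per point) imposed on the $28$-dimensional space of ternary sextics: the solution space is four-dimensional generically, $u^2,uv,v^2$ span a three-dimensional subspace, and the remaining generator $w$ of the symbolic square is produced with polynomial coefficients, the exceptional set again being where a chosen pivot minor drops rank. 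For (c), I would assemble $c(t,W)$ via the principal-ideal formula of Example~\ref{ex:120} and invoke Lemma~\ref{lem:exceptional curves and tritangents}: the image quadric $V(x_0x_2-x_1^2)$ is singular by construction, and smoothness and genus~$4$ of the fibre $C_\ab$ fail only on the zero locus of the discriminant $\Delta(a,b)$ of the family $\{c=0\}$, which is not identically zero since the generic configuration yields a smooth curve. For (d), I would fix one explicit tritangent --- say the type~$28$ plane from the line through $P_1,P_2$ --- whose equation $t^2 + e_1 t + e_2 + e_3 W$ has coefficients rational in $a,b$; its tritangency holds on the good locus by Lemma~\ref{lem:exceptional curves and tritangents}, and the condition $e_3(a,b)\neq 0$ is open, excluded on a further finite set.

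The hard part will be (e), the one clause not delivered by generic openness. Having assembled $X$ as the finite union of the exceptional loci above, I would verify at each of its points, by a direct computation over $k$ or a finite extension, that $C_\ab$ is still a smooth genus~$4$ curve and that none of its $120$ tritangents acquires a contact point of order larger than~$4$. This pointwise check is what guarantees that the degenerations of the \emph{parametrization} at $X$ do not coincide with genuine degenerations of the \emph{geometry}, so that the later degree count along the pencil $\{(a:b:0)\}$ is not corrupted by the parameters where the symbolic construction breaks down. The delicate issue throughout is that Gr\"obner-basis and elimination outputs computed over $k(a,b)$ need not specialize correctly at special parameters; the whole proof amounts to bounding, and then certifying, exactly this exceptional set. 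Finally, working over the large finite field $k=\FF_{10^6+3}$ rather than over $\QQ$ keeps the symbolic computations feasible while leaving the $\ZZ^8$-degrees of the components of $Y$ unchanged, as noted before the proposition.
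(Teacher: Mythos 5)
The paper gives no proof of this proposition to compare against: immediately before the statement it says ``The following result concerns generic specializations. We omit the proof.'' The proposition is a computational certificate, and its verification lives in the authors' scripts rather than in the text. Judged on its own terms, your reconstruction is the argument this omitted proof has to be, and it is consistent with everything the surrounding text implies: run the Section~2 construction over $R=k[a,b]$ with the eighth point $(a:b:0)$ left generic; note that each step (the kernel of the $8\times 10$ evaluation matrix for cubics, the $24$ double-vanishing conditions on the $28$-dimensional space of sextics, the elimination producing $c$, the explicit formula for a tritangent $h$) is a rational procedure that commutes with specialization away from the vanishing of finitely many nonzero polynomials in $a,b$; collect those vanishing loci into $X$; and certify (e) by a finite computation. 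You also identify correctly the one point that cannot be handled by genericity: clause (e) is a contingent fact about the particular seven fixed points, provable only by direct computation at the (possibly $\FF_p$-irrational) points of $X$, and its role is exactly what you say --- it forces $X$ to be disjoint from the discriminant along the pencil, so that the intersection count in the derivation of Theorem~\ref{thm:discriminant} sees only parameters where $c_\ab$ and $h_\ab$ genuinely describe the curve and its tritangent. Two small caveats. First, if the pointwise certification in (e) failed, the proposition as stated would be false for this configuration and one would have to choose different fixed points; your proposal implicitly assumes the check succeeds, which is the nature of such computational statements, but it is worth making explicit that (e) constrains the choice of $\mathcal{P}$, not just the choice of $X$. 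Second, a single $h$ (your type-$28$ plane through $P_1,P_2$) suffices for the proposition as literally stated, but the application to Theorem~\ref{thm:discriminant} computes the degree of \emph{each} of the $120$ components separately, so in practice (d) must be instantiated with a representative $h$ for each pattern 8, 28, 56, 56/2; this is a routine repetition of your argument but should be flagged.
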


We now derive Theorem \ref{thm:discriminant} from Proposition \ref{prop:discriminant computations}.
The degree $d_8$ of $Y$ in the last point $P_8$ is computed by restricting to the slice
\[
  \{(24  : -23 : 57)\} \times \{(11  :  25 : -27)\} \times \ldots \times \{(88  : 29 : 69)\} \times \PP^2.
\]
This restriction of $Y$ is a curve of degree $d_8$ in $\PP^2$.
We compute this degree as the number of points in the intersection with the line
\[
  S \,\,=\,\, \{ (a : b : c ) \in \PP^2 : c=0 \}.
\]
The same argument works also for each irreducible component of $Y$.
These components correspond to the various tritangent patterns,
marked 8, 28, 56 and 56/2. We perform this computation for
each pattern over $\FF_p$, and we obtain the numbers stated
in Theorem \ref{thm:discriminant}.

\smallskip

We now turn to the canonical representation of arbitrary space sextics
$C = Q \cap K$, namely by pairs $(Q,K)$ in $\PP^9 \times \PP^{19}$.
We shall identify three irreducible hypersurfaces in $\PP^9 \times \PP^{19}$
that serve as discriminants for different scenarios of how $C$ can degenerate.
For each hypersurface, we shall determine its {\em bidegree} $(\alpha,\beta)$.
Here $\alpha$ is the degree of its defining polynomial in the coefficients of $Q$,
and $\beta$ is the degree of its defining polynomial in the coefficients of $K$.

First, there is the classical discriminant $\Delta_1$, which parametrizes
all pairs $(Q,K)$ such the curve $C = Q \cap K$ is singular.
This is an irreducible hypersurface in $\PP^9 \times \PP^{19}$, revisited
recently in \cite{BN17}. The general points of $\Delta_1$
are irreducible curves $C$ of arithmetic genus $4$ that have one simple node,
so the geometric genus of $C$ is~$3$.
The discriminant $\Delta_1$ specifies the wall to be crossed when the number of real
tritangents changes as $(Q,K)$ moves throughout $\,\PP^9_\RR \times \PP^{19}_\RR$.

Second, there is the wall to be crossed when the number of totally real tritangents changes.
The discriminant $\Delta_2$ comprises space sextics with a tritangent $H$
that is degenerate, in the sense that $H$ is tangent at one point and doubly
tangent at another point of $C$. For real pairs $(C,H)$, such a point of double tangency
deforms into two contact points of a tritangent $H_\epsilon$ at a nearby curve $C_\epsilon$,
and this pair is either real or complex conjugate.
On the hypersurface in $\PP^9 \times \PP^{19}$
where $Q$ is singular, the locus $\Delta_2$ is the image of
the discriminant with $120$ components in Theorem \ref{prop:discriminant computations}
under the map that takes
a configuration $\mathcal{P} \in (\PP^2)^8$ to its
associated curve $C_\mathcal{P}$.

Our third discriminant $\Delta_3$ parametrizes
pairs $(Q,K)$ such that the curve $C = Q \cap K$ has
two distinct tritangents that share a common contact point on $C$.
In other words, the curve $C$ has a point whose tangent line
is contained in two tritangent planes. The discriminant $\Delta_3$
furnishes an embedded realization of the {\em common contact locus} that was studied
in the dissertation of Emre Sert\"oz \cite[\S 2.4]{EmreDiss}.

The following theorem was found with the help of
Gavril Farkas and Emre Sert\"oz. The numbers are
derived from results in \cite{FaVe14, EmreDiss}.

\begin{theorem} \label{thm:bidegrees}
  The discriminantal loci $\Delta_1$, $\Delta_2$ and $\Delta_3$
  are irreducible and reduced hypersurfaces in $\,\PP^9 \times \PP^{19}$. Their bidegrees are
  $$  \begin{matrix}
    {\rm bidegree}(\Delta_1) & =  &  (33,34), \\
    {\rm bidegree}(\Delta_2) & = & (744, 592), \\
    {\rm bidegree}(\Delta_3) & = & (8862, 5236).
  \end{matrix}
  $$
\end{theorem}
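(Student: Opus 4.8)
\emph{Proof proposal.} The plan is to relate all three discriminants to divisor classes on the moduli space $\overline{M}_4$ and to transport those classes to $\PP^9\times\PP^{19}$ along the dominant rational map $\Phi\colon \PP^9\times\PP^{19}\dashrightarrow \overline{M}_4$ that sends $(Q,K)$ to the class of $C=Q\cap K$. Since $\mathrm{Pic}(\PP^9\times\PP^{19})=\ZZ h_Q\oplus\ZZ h_K$, computing a bidegree $(\alpha,\beta)$ means writing the relevant pullback in this basis: here $\alpha$ is read off from a generic pencil of quadrics with $K$ fixed, and $\beta$ from a generic pencil of cubics with $Q$ fixed. The classical discriminant is simply $\Delta_1=\Phi^*\delta_0$, so its bidegree will drop out of the pullback computation. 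For $\Delta_2$ and $\Delta_3$ I would first express their classes on $\overline{M}_4$ in the tautological basis $\lambda,\delta_0,\delta_1,\delta_2$, using the spin-moduli divisor-class computation of \cite{FaVe14} for the tritangent (non-reduced theta) divisor and Sert\"oz's analysis \cite{EmreDiss} of the common contact locus, and then apply $\Phi^*$.

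The technical core is the computation of $\Phi^*\lambda$ and $\Phi^*\delta_0$ by restriction to the two kinds of pencils. Fixing a smooth quadric $Q\cong\PP^1\times\PP^1$ and moving $K$ in a generic pencil, the members $C_\lambda=Q\cap K_\lambda$ have class $(3,3)$ and $18$ base points; blowing these up yields a fibered surface $S_K\to\PP^1$ with $\chi_{\mathrm{top}}(S_K)=4+18=22$ and general fiber of genus $4$, hence $\chi_{\mathrm{top}}(F)=-6$. The number of nodal fibers is $\chi_{\mathrm{top}}(S_K)-2\chi_{\mathrm{top}}(F)=22+12=34$, while $\deg\lambda=\chi(\mathcal{O}_{S_K})-1+g=1-1+4=4$ since $S_K$ is rational. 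Fixing instead a smooth cubic surface $K$ and moving $Q$ produces curves of class $2H$ with $12$ base points; the blow-up $S_Q$ has $\chi_{\mathrm{top}}(S_Q)=9+12=21$, giving $21+12=33$ nodal fibers and again $\deg\lambda=4$. A generic pencil meets the boundary only in irreducible one-nodal curves, so it misses $\delta_1$ and $\delta_2$, whence $\Phi^*\delta_1=\Phi^*\delta_2=0$. Collecting, $\Phi^*\lambda=4h_Q+4h_K$ and $\Phi^*\delta_0=33h_Q+34h_K$, which already gives $\mathrm{bidegree}(\Delta_1)=(33,34)$.

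The remaining step is then purely formal. Writing $[\Delta_i]=a_i\lambda-b_i\delta_0+(\text{terms in }\delta_1,\delta_2)$ on $\overline{M}_4$ and using $\Phi^*\delta_1=\Phi^*\delta_2=0$, I obtain
\[
  \mathrm{bidegree}(\Delta_i)\;=\;a_i\,\Phi^*\lambda-b_i\,\Phi^*\delta_0\;=\;a_i(4,4)-b_i(33,34),
\]
so that the two components differ by exactly $b_i$, since $\lambda$ pulls back symmetrically. The values $(a_2,b_2)=(1440,152)$ and $(a_3,b_3)=(32130,3626)$ extracted from \cite{FaVe14,EmreDiss} then yield $1440(4,4)-152(33,34)=(744,592)$ and $32130(4,4)-3626(33,34)=(8862,5236)$, as asserted.

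Finally, irreducibility of $\Delta_1$ is the classical fact recalled from \cite{BN17}. For $\Delta_2$ and $\Delta_3$ I would deduce irreducibility from that of the divisors upstairs --- the non-reduced theta divisor on the irreducible odd spin moduli $\mathcal{S}_4^-$ and the common contact locus of \cite{EmreDiss} --- together with the fact that $\Phi$ is dominant with irreducible general fibers, each being a $\mathrm{PGL}_4$-orbit times the $\PP^4$ of cubics through $C$; pullbacks of irreducible divisors under such a map remain irreducible. Reducedness follows from generic transversality, as each $\Delta_i$ is cut out by the simple vanishing of a discriminant (of the contact cubic $g$, respectively of the common-contact resultant), which is generically reduced and pulls back to a reduced divisor. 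The main obstacle is the third step: pinning down the exact classes $[\Delta_2]$ and $[\Delta_3]$ on $\overline{M}_4$ rests on the Farkas--Verra divisor calculus on spin moduli and on Sert\"oz's study of the common contact locus, which supply the coefficients $a_i,b_i$ and constitute the genuine input behind the theorem.
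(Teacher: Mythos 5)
Your proposal is correct, and its computational core coincides with the paper's: both realize $\Delta_2$ and $\Delta_3$ as pullbacks of the Farkas--Verra divisor $D_4$ and of Sert\"oz's common contact divisor $Q_4$ on $\overline{\mathcal{M}}_4$, use the same classes $1440\lambda-152\delta_0-\cdots$ and $32130\lambda-3626\delta_0-\cdots$ from \cite{FaVe14,EmreDiss}, and evaluate against the two test pencils $h$ and $v$, arriving at the identical arithmetic $1440\cdot 4-152\cdot(33,34)$ and $32130\cdot 4-3626\cdot(33,34)$. Where you go beyond the paper: it merely \emph{asserts} that $h\cdot\lambda=v\cdot\lambda=4$ and $h\cdot\delta_1=h\cdot\delta_2=v\cdot\delta_1=v\cdot\delta_2=0$, and it quotes \cite{BN17} for $(h\cdot\delta_0,v\cdot\delta_0)=(33,34)$; your Euler-characteristic computation on the blown-up pencils ($18$ resp.\ $12$ base points, $\chi_{\mathrm{top}}=22$ resp.\ $21$, hence $34$ resp.\ $33$ nodal fibers, and $\deg\lambda=\chi(\mathcal{O}_S)+g-1=4$ since the total spaces are rational) proves all of these at once and makes the $\Delta_1$ bidegree self-contained rather than cited. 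For irreducibility you take a genuinely different route: the paper marks the special contact points and uses a double-projection argument exhibiting $\Delta_i$ as a family of linear spaces of fixed dimension, whereas you pull irreducibility back from the moduli space through the fibers of $\Phi$. Your argument is sound but shifts the burden upstairs: you need $D_4$ and $Q_4$ themselves to be irreducible (this does not follow formally from irreducibility of the odd spin moduli space, though it is supplied by \cite{FaVe14,EmreDiss}), and you need the general point of each divisor to be a smooth non-hyperelliptic curve so that your fiber description ($\mathrm{PGL}_4$-orbit times the $\PP^4$ of cubics through the canonical curve) applies --- which holds because neither divisor lies in the boundary or in the codimension-two hyperelliptic locus of $\mathcal{M}_4$. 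Two points worth making explicit in a final write-up: the nodal-fiber count $\chi_{\mathrm{top}}(S)-2\chi_{\mathrm{top}}(F)$ presumes a generic pencil whose singular members are irreducible and uninodal (this genericity is also exactly what justifies vanishing against $\delta_1,\delta_2$), and the test pencils must avoid the indeterminacy locus of $\Phi$; both are automatic for general $h,v$ but are silently used.
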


\begin{proof}
  Consider the discriminant $\Delta_1$
  for curves in $\PP^3$ that are intersections of two surfaces of degree $d$ and $e$.
  It has bidegree
  $$
  \bigl(  \,e(3d^2{+}2de{+}e^2{-}8d{-}4e{+}6)\,, \,
  d(3e^2{+}2de{+}d^2{-}8e{-}4d{+}6) \,\bigr).
  $$
  This can be found in many sources, including
  \cite[Proposition 3]{BN17}.
  For $d=2$ and $e=3$ we obtain $\,{\rm bidegree}(\Delta_1) = (33,34)$, as desired.

  To determine the other two bidegrees, we employ known facts from the
  enumerative geometry of $\overline{\mathcal{M}}_4$,
  the moduli space of stable curves of genus~$4$.
  The Picard group ${\rm Pic}(\overline{\mathcal{M}}_4)$
  is generated by four classes $\lambda,\delta_0,\delta_1,\delta_2$.
  Here $\lambda$ is the {\em Hodge class}, and the $\delta_i$ are classes
  of irreducible divisors in the boundary $\overline{\mathcal{M}}_4 \backslash \mathcal{M}_4$.
  They represent:
  \begin{itemize}
  \item[$\delta_0$:] a genus $3$ curve that self-intersects at one point;
  \item[$\delta_1$:] a genus $1$ curve intersects a genus $3$ curve at one point;
  \item[$\delta_2$:] two genus $2$ curves intersect at one point.
  \end{itemize}
  Our discriminants $\Delta_i$ are the inverse images
  of known irreducible divisors in the moduli space under the rational map
  $\PP^9 \times \PP^{19} \dashrightarrow \overline{\mathcal{M}}_4$.

  First, $\Delta_2$ is the pull-back of the divisor $D_4 \subset \overline{\mathcal{M}}_4$ of
  curves with degenerate odd spin structures. It follows from \cite[Theorem 0.5]{FaVe14}~that
  \begin{equation}
    \label{eq:farkasverra}
    [D_4] \,\,=\,\,1440 \lambda-152 \delta_0 - \alpha \delta_1 - \beta  \delta_2  \quad
    \hbox{for some $\alpha,\beta \in \mathbb{N}$}.
  \end{equation}
  For any curve $\gamma \subset \overline{\mathcal{M}}_4$, the
  sum $\sum_{i=0}^2 \gamma \cdot \delta_i$ counts points on $\gamma$
  whose associated curve is singular.
  Write $h$ resp.~$v$ for the curve $\gamma$
  that represents $\,{\it line} \times {\it point}\,$ resp.~$\,{\it point} \times {\it line}\,$ in
  $\PP^9 \times \PP^{19}$. We saw
  $$(h \cdot \delta_0, v \cdot \delta_0) \,=\, {\rm bidegree}(\Delta_1) \,=\, (33,34). $$
  Moreover, it can be shown that
  $$ h \cdot \lambda = v \cdot \lambda = 4 \qquad \hbox{and} \qquad
  h \cdot \delta_i = v \cdot \delta_i = 0 \quad \hbox{for $\,i=1,2$. } $$
  This implies the assertion about the bidegree of our discriminant:
  $$
  {\rm bidegree}(\Delta_2) =
  (h \cdot [D_4], v\cdot [D_4]) =
  (1440 \cdot 4 - 152 \cdot 33,
  1440 \cdot 4 - 152 \cdot 34).
  $$

  Similarly, $\Delta_3$ is the pull-back of the {\em common contact divisor}
  $Q_4 \subset \overline{\mathcal{M}}_4$ studied by
  Sert\"oz. It follows from \cite[Theorem II.2.43]{EmreDiss} that
  \begin{equation}
    \label{eq:emre}
    [Q_4] \,\,=\,\, 32130 \lambda- 3626 \delta_0 - \alpha \delta_1 - \beta  \delta_2  \quad
    \hbox{for some $\alpha,\beta \in \mathbb{N}$}.
  \end{equation}
  Replacing (\ref{eq:farkasverra}) with (\ref{eq:emre}) in our argument,
  we find that
  $ {\rm bidegree}(\Delta_3) $~is
  $$ (h \cdot [Q_4], v\cdot [Q_4]) \,=\,
  (32130 \cdot 4 - 3626 \cdot 33,\,
  32130 \cdot 4 - 3626 \cdot 34). $$
  This completes our derivation of the bidegrees in Theorem~\ref{thm:bidegrees}.

  The irreducibility of the loci $\Delta_i$ is shown by a standard
  double-projection argument. One marks the relevant
  special point(s) on~$C$. Then $\Delta_i$
  becomes a family of linear spaces of fixed dimension.
\end{proof}

\section{What Next?}

In this paper, we initiated the computational study of
totally real tritangents of space sextics in $\PP^3$.
These objects are important in algebraic geometry because they
represent odd theta characteristics of canonical curves of genus $4$.
We developed systematic tools for constructing curves all of whose
tritangents are defined over algebraic extensions of $\QQ$, and we used this
to answer the longstanding question whether the
upper bound of $120$ totally real tritangent planes can be attained.
We argued that computing the tritangents directly from
the representation $C = Q \cap K$ is hard,
and we characterized the discriminants for these polynomial systems.

\smallskip

This article leads to many natural directions to be explored next.
We propose the following eleven specific problems for further study.

\begin{enumerate}[leftmargin=*]
\setlength\itemsep{1mm}
\item Decide whether every integer between $0$ and $120$ is realizable.
\item Determine the correct upper and lower bounds in Table \ref{tab:fivetypes}.
  In particular, is $84$ the lower bound for curves with five ovals?
\item A smooth quadric $Q$ is either an ellipsoid or a hyperboloid.
  Degtyarev and Zvonilov \cite{DZ} characterized the topological types
  of real space sextics on these surfaces.
  What are the possible numbers of totally real tritangents for their types?
\item What does \cite{DZ} tell us about space sextics on
  a singular quadric $Q$? Which types arise on $Q$, how do they deform to
  those on a hyperboloid, and what does this imply for tritangents?
\item Given a space sextic $C$ whose quadric $Q$ is singular, how to best compute
  a configuration $\mathcal{P} \in (\PP^2)^8$ such that $C = C_{\mathcal{P}}$?
  Our idea is to design an algorithm based on
  the constructions described in \cite[Proposition 4.8 and Remark 4.12]{Kulkarni17}.
\item Lehavi \cite{Lehavi} shows that a general space sextic $C$ can be reconstructed from its $120$ tritangents.
  How to do this in practice?
\item Let $C_\mathcal{P}$ be the space sextic of a configuration $\mathcal{P} \in (\PP^2)^8$.
  How to see the ovals of $C_\mathcal{P}$ in $\PP^2$? For each tritangent as in
  Figure \ref{fig:type28}, \ref{fig:type58} or \ref{fig:type58/2}, how to see
  the number of ovals it touches?
\item Design a custom-tailored {\em homotopy algorithm} for numerically computing the
  $120$ tritangents from the pair $(Q,K)$.
\item The {\em tropical limit} of a space sextic has $15$ classes of tritangents,
  each of size eight \cite[Theorem 5.2]{HL18}. This is realized classically
  by a {\em $K_{3,3}$-curve}, obtained by taking $K$
  as three planes tangent to a smooth quadric $Q$.
  How many totally real tritangents are possible in the vicinity
  of $(Q,K) $ in $ \PP^9_\RR \times \PP^{19}_\RR$~?
\item The $28$ bitangents of a plane quartic are the off-diagonal
  entries of a symmetric $8 \times 8$-matrix, known as the
  {\em bitangent matrix} \cite{Manni}. How to generalize this to
  genus $4$? Is there such a canonical matrix (or tensor)
  for the $120$ tritangents?
\item What is maximal number of $2$-dimensional faces in
  the convex hull of a space sextic in $\RR^3$?
  There are at most $120$ such facets.
In addition, there are infinitely many edges. These form a
ruled surface of degree $54$, by \cite[Theorem 2.1] {Ranestad}.
\end{enumerate}
\vfill
Between the initial and the final version of this paper, much progress was made on Question (2) in \cite{Kummer18,HKSS18}, and Question (11) was answered in \cite{Kummer18}: there are at most $8$ facets.

\bibliographystyle{ACM-Reference-Format}
\bibliography{tritangents}

\end{document}